\newtheorem{theorem}{Theorem}[section]
\newtheorem{proposition}[theorem]{Proposition}
\newtheorem{corollary}[theorem]{Corollary}
\newtheorem{lemma}[theorem]{Lemma}
\theoremstyle{definition}
\newtheorem{remark}[theorem]{Remark}
\theoremstyle{definition}
\theoremstyle{remark}
\newcommand\restr[2]{{
  \left.\kern-\nulldelimiterspace 
  #1 
  \vphantom{\big|} 
  \right|_{#2} 
  }}
\newcommand{\vertiii}[1]{{\left\vert\kern-0.25ex\left\vert\kern-0.25ex\left\vert #1 
    \right\vert\kern-0.25ex\right\vert\kern-0.25ex\right\vert}}
\newcommand\norm[1]{\|#1\|}
\newcommand\vn[1]{|#1|^{\frac{1}{n}}}
\newcommand\vd[1]{|#1|^{\frac{1}{d^2}}}
\newcommand\vrn[2]{\left(\frac{|#1|}{|#2|}\right)^{\frac{1}{n}}}
\def\Sn{S^{n-1}}
\def\vr{\text{vr}}
\def\diam{\text{diam}}
\def\iso{\text{Iso}}
\def\absconv{\text{absconv}}
\def\x{\mathbb{x}}
\def\+{\mathbb{+}}
\def\R{\mathbb{R}}
\def\E{\mathbb{E}}
\def\N{\mathbb{N}}
\def\P{\mathbb{P}}
\def\PP{\mathcal{P}}
\def\BB{\mathcal{B}}
\def\MM{\mathcal{M}}
\def\NN{\mathcal{N}}
\def\OO{\mathcal{O}}
\def\PP{\mathcal{P}}
\def\SS{\mathcal{S}}
\def\LL{\mathcal{L}}
\def\lvr{\text{lvr}}
\newcommand{\Pro}{\mathbb P}
\def \lss{\preceq}
\def \gss{\succeq}
\def \tenes{\bigotimes_{\varepsilon_s}^{m,s} \ell_p^n}
\def \tenps{\bigotimes_{\pi_s}^{m,s} \ell_p^n}
\def \tene{\bigotimes_{\varepsilon}^{m} \ell_p^n}
\def \tenp{\bigotimes_{\pi}^{m} \ell_p^n}
\def \tenese{\bigotimes_{\varepsilon_s}^{m,s} E}
\def \tenpse{\bigotimes_{\pi_s}^{m,s} E}
\def \tenee{\bigotimes_{\varepsilon}^{m} E}
\def \tenpe{\bigotimes_{\pi}^{m} E}
\title[Largest volume ratio of a body]{Asymptotic estimates for the largest volume ratio of a convex body}
\author[D. Galicer]{Daniel Galicer}
\address{ Departamento de Matem\'{a}tica - IMAS-CONICET,
Facultad de Cs. Exactas y Naturales  Pab. I, Universidad de Buenos Aires
(1428) Buenos Aires, Argentina}
\email{dgalicer@dm.uba.ar}
\author[M. Merzbacher]{Mariano Merzbacher}
\address{ Departamento de Matem\'{a}tica - IMAS-CONICET,
Facultad de Cs. Exactas y Naturales  Pab. I, Universidad de Buenos Aires
(1428) Buenos Aires, Argentina}
\email{mmerzbacher@dm.uba.ar}
\author[D. Pinasco]{Dami\'an Pinasco}
\address{Departamento de Matem\'{a}ticas y Estad\'{\i}stica, Universidad T. Di Tella, Av. Figueroa Alcorta 7350 (1428), Buenos Aires, Argentina and CONICET}
\email{dpinasco@utdt.edu}
\keywords{Volume Ratio, Random Polytopes, Unconditional Convex Bodies, Schatten Classes}
\subjclass[2010]{52A23, 52A38, 52A40 (primary); 52A21, 52A20, 47B10 (secondary)}
\thanks{The second author was supported by a CONICET doctoral fellowship.
This was partially supported by  CONICET PIP 11220130100329, CONICET PIP 11220090100624, ANPCyT PICT 2015-2299}
\begin{document}


\begin{abstract}
The
\emph{largest volume ratio} of given convex body $K \subset \R^n$  is defined as
$$\lvr(K):= \sup_{L \subset \R^n} \vr(K,L),$$
    where the $\sup$ runs over all the convex   bodies $L$.
   We prove the following sharp lower bound  $$c \sqrt{n}  \leq \lvr(K)  ,$$ for \emph{every} body $K$ (where $c>0$ is an absolute constant). This result improves the former best known lower bound, of order  $\sqrt{\frac{n}{\log \log(n)}}$.  
   
    We also study the exact asymptotic behavior of the largest volume ratio for some natural classes. In particular, we show that $\lvr(K)$ behaves as the square root of the dimension of the ambient space in the following cases: if $K$ is the unit ball of an unitary invariant norm in $\R^{d \times d}$ (e.g., the unit ball of the $p$-Schatten class $S_p^d$ for any $1 \leq p \leq \infty$), $K$ is the the unit ball of the full/symmetric tensor product of $\ell_p$-spaces endowed with the projective or injective norm or $K$ is unconditional. 
\end{abstract}


\maketitle
\section{Introduction}

For many applications in asymptotic geometric analysis, convex geometry or even optimization it is useful to approximate a given convex body by another one. For example, the classical Rogers-Shephard  inequality \cite[Theorem 1.5.2]{artstein2015asymptotic}  states that, for a convex body $K \subset \R^n$, the volume of the difference body $K-K$
is  ``comparable'' with the volume of $K$. Precisely, $\vn{K-K} \leq 4 \vn{K}$ where  $|\cdot|$ stands for the $n$-dimensional Lebesgue measure.  Rogers and Shephard also showed, with the additional assumption that $K$ has barycenter at the origin, that the intersection body $K \cap (-K)$ has ``large'' volume. Namely, $\vn{K \cap (-K)} \geq \frac{1}{2} \vn{K}$.
These inequalities imply that any given body is enclosed by (or contains) a symmetric body whose volume is ``small'' (``large'') enough.
In many cases this allows us to take advantage of the symmetry of the difference body (or the intersection body) to conclude something about $K$.

Another interesting example of Milman and Pajor \cite[Section 3]{milman1989isotropic} shows that \begin{align}\label{Milman-Pajor}
    L_K \leq c \inf \left\{ \left(\frac{|W|}{|K|}\right)^{\frac{1}{n}} :  W \mbox{ is unconditional and contains } K \right\},
    \end{align}
where $L_K$ stands for the isotropic constant of $K \subset \R^n$ (see  \cite[Section 2.3.1]{brazitikos2014geometry}) and $c>0$ is an absolute constant.
Therefore, having a good approximation of $K$ by an unconditional convex body provides structural geometric information of $K$.

Perhaps the most notable application of these kind of approximations can be viewed when studying John/L\"owner ellipsoid (maximum/minimum volume ellipsoid respectively). For example, if the Euclidean ball is the maximal volume ellipsoid inside $K$, we can decompose the identity as a linear combination of rank-one operators defined by contact points \cite[Theorem 2.1.10]{artstein2015asymptotic}. This decomposition plays a key role in the study of distances between bodies, see \cite{tomczak1989banach} for a complete treatment on this. We also refer to
\cite{matouvsek2002lectures,gruber2007convex,giannopoulos2001john,lassak1992banach, lassak1998approximation,pelczynski1983structural} for many nice results/applications which involve these extremal ellipsoids.
A natural quantity that relates a given body $K$ with its ellipsoid of maximal volume is given by the ``standard'' \emph{volume ratio}
\begin{align}\label{volume ratio clasico}
\vr(K) = \inf \left\{ \left(\frac{|K|}{|\mathcal E|}\right)^{\frac{1}{n}} : \mathcal E \mbox{ is an ellipsoid contained in } K \right\}.
\end{align}
Using the Brascamp-Lieb inequality,  Ball showed that $\vr(K)$ is maximal  when $K$ is a simplex . The extreme case, among all the centrally symmetric convex bodies, is given by the cube (see \cite[Theorem 2.4.8]{artstein2015asymptotic}).

A natural generalization of this ratio is given by the following definition introduced by Giannopoulos and Hartzoulaki \cite{giannopoulos2002volume} and also studied by Gordon, Litvak,  Meyer and Pajor \cite{gordon2004john}: given two convex bodies $K$ and $L$ in  $\R^n$  the \emph{volume ratio} of the pair $(K,L)$ is defined as
\begin{align} \label{volumeratiogeneralizado}
 \vr(K,L):= \inf \left\{ \left(\frac{|K|}{|T(L)|}\right)^{\frac{1}{n}} : T(L) \mbox{ is  contained in } K  \right\},
 \end{align}
where the infimum (actually a minimum) is taken over all affine transformations $T$.
In other words, $\vr(K,L)$ measures how well can $K$ be approximated by an affine image of $L$. Note that the classic value $\vr(K)$ is just $\vr(K,B_2^n)$ where $B_2^n$ is the Euclidean unit ball in $\R^n$.

Given a convex body $K$, it is natural to ask how ``good'' an approximation of this kind can be (in terms of the dimension of the ambient space). Namely, we want to known  \emph{how large the value  $\vr(K,L)$ is} (for arbitrary convex bodies $L \subset \R^n$). Thus, it is important to compute the \emph{largest volume ratio of $K$}, given by
$$\lvr(K):= \sup_{L \subset \R^n} \vr(K,L),$$
where the $\sup$ runs over all the convex   bodies $L$.

Khrabrov (based on the well-known construction due to Gluskin \cite{gluskin1981diameter} used to understand the diameter of Minkowski compactum), showed in \cite[Theorem 5]{khrabrov2001generalized} the following:

{\sl
 For any convex body $K$ in $\R^n$ there is another body  $L \subset \R^n$ such that
\begin{align} \label{khravrov}
    c \sqrt{\frac{n}{\log \log(n)}} \leq \vr(K,L)
\end{align}
where $c>0$ is an absolute constant.}
The body $L$ is found using the probabilistic method (Khrabrov considered a random polytope whose vertices are sampled on the unit sphere and showed that, with high probability, it verifies Equation~\eqref{khravrov}).

On the other hand, it is very easy to see that $\vr(K,L) \leq n$ for every pair $(K,L)$.
Using Chevet's inequality together with clever positions of $K$ and $L$, Giannopoulos and Hartzoulaki \cite{giannopoulos2002volume} were able to prove the following important and stronger result:

{\sl Let $K$ and $L$ be two convex bodies in $\R^n$. Then
\begin{align}\label{GH}
    \vr(K,L) \leq c \sqrt{n} \log(n),
\end{align}
where $c>0$ is an absolute constant.}

Combining the results of Khravrov and  Giannopoulos-Hartzoulaki, i.e.,  Equations  \eqref{khravrov} and \eqref{GH}, we get:

{\sl For any  convex body $K$ in $\R^n$, its largest volume ratio verifies

\begin{align*}
\sqrt{\frac{n}{\log \log(n)}} \ll    \lvr(K) \ll  \sqrt{n} \log(n).
\end{align*}
}
The well known result of John \cite[Theorem 2.1.3]{artstein2015asymptotic} asserts that for any convex body $L\subset \R^n$ we have $\vr(B_2^n,L) \ll \sqrt{n}$. It is not difficult to see that $\sqrt{n} \ll \vr(B_2^n,B_1^n),$ where  $B_1^n$ stands for the unit ball of $\ell_1^n$ thus,
$$\lvr(B_2^n) \sim \sqrt{n}.$$

In \cite[Theorem 1.3]{galicer2017minimal} the authors of this article showed that any convex body $L \subset \R^n$ can be inscribed in a simplex $S$ such that $\vn{S} \leq c \sqrt{n} \vn{L}$, where $c>0$ is an absolute constant. In other words, if $S$ is a simplex then $\vr(S,L) \ll \sqrt{n}$, for every convex body $L \subset \R^n$.
Since the regular simplex is the minimal volume simplex that contains the Euclidean unit ball (see \cite[Example 2.7]{galicer2017minimal}), by computing volumes  we have $\sqrt{n} \ll \vr(S,B_2^n)$.
Therefore, for a  simplex $S$, we know the exact asymptotic behaviour of its largest volume ratio:
$$\lvr(S) \sim \sqrt{n}.$$
Therefore the largest volume ratio of a convex body, in many cases, behaves as the square root of the dimension (of the ambient space).

We prove the following lower bound, that substantially improves  \eqref{khravrov}.

\begin{theorem} \label{teo principal}
For any convex body $K$ in $\R^n$ there is another body  $L \subset \R^n$ such that
\begin{align} \label{khravrov mejorado}
    c \sqrt{n} \leq \vr(K,L)
\end{align}
where $c>0$ is an absolute constant. In other words,
\begin{align} \label{khravrov mejorado lvr}
    \sqrt{n} \ll \lvr(K).
\end{align}

\end{theorem}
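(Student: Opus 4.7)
The plan is to mimic and refine the probabilistic construction of Khrabrov, producing a random convex body $L$ that, for any prescribed $K$, realizes $\vr(K,L)\geq c\sqrt{n}$ with positive probability. Since $\lvr$ is affine invariant, I first put $K$ in John position, so that $B_2^n\subseteq K\subseteq\sqrt{n}\,B_2^n$; losing only a universal multiplicative constant, I may further replace $K$ by an origin-symmetric body comparable in volume (for example $\tfrac{1}{2}(K-K)$ after a translation, via Rogers--Shephard, or $K\cap(-K)$ via Milman--Pajor). The candidate is then
$$ L_\omega := \absconv\{X_1,\ldots,X_N\}, $$
where $X_1,\ldots,X_N$ are i.i.d.\ uniform on $S^{n-1}$ and $N$ is of order $n$; standard estimates (Cauchy surface-area formula and concentration on the sphere) give $|L_\omega|^{1/n}\sim 1/\sqrt{n}$ with overwhelming probability.

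The condition $\vr(K,L_\omega)\geq c\sqrt{n}$ must be translated into a random-inclusion statement. Dualizing the infimum in \eqref{volumeratiogeneralizado}, it is equivalent to asking that \emph{no} linear map $V$ simultaneously satisfies $\{X_1,\ldots,X_N\}\subseteq V(K)$ and $|L_\omega|/|\det V|>|K|(c\sqrt{n})^{-n}$. The strategy is then the classical one: build an $\varepsilon$-net $\NN$, of well-controlled cardinality, on the compact family of candidate maps $V$; for each fixed $V\in\NN$ bound
$$ \Pr\!\bigl[X_1,\ldots,X_N\in V(K)\bigr]\;\leq\;\left(\frac{\sigma(V(K)\cap S^{n-1})}{\sigma(S^{n-1})}\right)^{\!N}, $$
where $\sigma$ is the uniform surface measure; control the spherical measure of $V(K)$ by a geometric parameter of $V(K)$ (its mean width, or equivalently the volume of a polar of a suitable symmetrization); and finally take a union bound over $\NN$, extending to arbitrary $V$ by a continuity argument.

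The heart of the matter — and the source of the improvement over \eqref{khravrov} — is the sharpness of the single-map estimate above. Khrabrov's analysis loses a logarithmic factor there, and that loss is precisely responsible for the $\sqrt{n/\log\log n}$ in \eqref{khravrov}. Removing it requires a carefully calibrated small-ball estimate: a bound on $\sigma(V(K)\cap S^{n-1})$, in terms of the volume/width parameters of $V(K)$, that matches the net cardinality without any logarithmic slack — together with an optimal choice of the number of vertices $N$ and of the normalization of $K$. Once such an inequality is in place, the union bound closes with positive probability and any realization of $L_\omega$ yields the desired body $L$.
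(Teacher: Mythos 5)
Your overall architecture (random polytope, $\varepsilon$-net over the admissible maps, union bound against a single-map spherical small-ball estimate) is exactly the paper's, but the proposal stops precisely where the new work happens, and it misdiagnoses where Khrabrov's loss comes from. The single-map estimate $\Pr\{\|T:X_{L}\to X_K\|\le\alpha\}\le\alpha^{mn}(|K|/|B_2^n|)^m$ (Lemma 38.3 of Tomczak-Jaegermann, i.e.\ exactly your spherical-cap bound raised to the power $N$) is already sharp and is used unchanged both by Khrabrov and in the paper; there is no logarithmic slack there. The loss in \eqref{khravrov} sits on the \emph{entropy} side: Khrabrov's net for the family of maps has cardinality of order $(C\sqrt n)^{n^2}$, which forces him to take $m\sim n\log n$ vertices to win the union bound, and a polytope with that many vertices has volume radius $\sim\sqrt{\log\log n}/n$ rather than $1/n$ --- that is the entire $\sqrt{\log\log n}$. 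The paper's fix is twofold: (i) a different net (Lemma~\ref{lemarednuevo}), built inside $\{T\in SL(n,\R):\|T:\ell_1^n\to X_K\|\le\gamma\}$ in the metric of $\LL(\ell_2^n,X_K)$, whose cardinality is $C^{n^2}\bigl(\|id:\ell_2^n\to X_K\|\sqrt n\,|K|^{1/n}\bigr)^{n^2}$ (this is also why the vertices $\pm e_i$ are kept in $L^{(m)}$: they guarantee $B_1^n\subset L^{(m)}$, so every admissible $T$ lies in the set being netted); and (ii) a reduction to bodies with $\|id:\ell_2^n\to X_K\|\sqrt n\,|K|^{1/n}\sim1$, after which the net has only $C^{n^2}$ points and $m\sim n$ suffices.

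Step (ii) is the ingredient your proposal is missing, and John position does not supply it: for the cube in John position one has $\|id:\ell_2^n\to\ell_\infty^n\|\,\sqrt n\,|B_\infty^n|^{1/n}\sim\sqrt n$, so the net count would still be $(C\sqrt n)^{n^2}$ and you would be back at Khrabrov's bound. The paper obtains the needed normalization by replacing $K$ (up to volume ratio $\sim1$) with a body of bounded isotropic constant via Klartag's isomorphic slicing theorem and truncating the polar by $c\sqrt n\,B_2^n$, or alternatively by passing to the $M$-position (Proposition~\ref{propo aprox}). Without such an approximation step your union bound does not close with $N\sim n$ vertices, and your closing sentence (``once such an inequality is in place\dots'') asserts rather than proves the decisive estimate. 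A smaller but genuine error: for $N\sim n$ points on $S^{n-1}$ one has $|L_\omega|^{1/n}\sim 1/n$, not $1/\sqrt n$; the value $1/n$ is in fact what the final computation requires, since $\vr(K,L)\gtrsim |K|^{1/n}\cdot n\cdot\inf_T\|T\|$ with $\inf_T\|T\|\gtrsim(\sqrt n\,|K|^{1/n})^{-1}$ yields $\sqrt n$, whereas $1/\sqrt n$ would only give a constant.
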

Moreover, we show that there are ``many'' (with high probability) random polytopes $L$ which verify equation \eqref{khravrov mejorado}. 
As we saw before in the previous examples, this lower bound cannot be improved in general. 

To obtain Theorem \ref{teo principal} we make some important changes in Khravrov's proof, which require finer estimates, and use some approximation arguments.

\medskip

We also deal, for same natural classes of convex bodies, with the upper bounds. Our results are of probabilistic nature, so we will be interested in obtaining bounds with high probability.

First we treat the case of the Schatten trace classes, the non-commutative version of the classical $\ell_p$ sequence spaces. They consist of all compact operators on a Hilbert space for which the sequence of their singular values belongs to $\ell_p$.
Many different properties of them in the finite dimensional setting have been largely studied in the area of asymptotic geometric analysis. For example, K\"oning, Meyer and Pajor \cite{konig1998isotropy} established the boundedness of the isotropic constants of the unit balls of $\SS_p^d \subset \R^{d \times d}$ ($1 \leq p \leq \infty$), Gu\'edon and Paouris \cite{guedon2007concentration} also studied concentration mass properties for the unit balls, Barthe and Cordero-Eurasquin \cite{barthe2013invariances} analyzed variance estimates, Radke and Vritsiou \cite{radke2016thin} proved the thin-shell conjecture, and recently Kabluchko, Prochno and Th\"ale \cite{kabluchko2018exact} exhibited the exact asymptotic behaviour of the volume and standard volume ratio; just to mention a few.

Therefore it is natural to try to understand what happens with the largest volume ratio of their unit ball. The following theorem provides an answer to this query.

\bigskip

\begin{theorem}
Let $1\leq p \leq \infty$ and $\SS_p^d \subset \R^{d \times d}$ be the $p$-Schatten class. The largest volume ratio of its unit ball, $B_{\SS_p^d}$, behaves as
\begin{align}
    \lvr(B_{\SS_p^d}) \sim d.
\end{align}
\end{theorem}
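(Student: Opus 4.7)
The lower bound $d \ll \lvr(B_{\SS_p^d})$ is immediate from Theorem~\ref{teo principal} applied in ambient dimension $n=d^2$, since $\sqrt{d^2}=d$. All the work is in the matching upper bound, which amounts to showing
\[
\vr(B_{\SS_p^d},L) \ll d
\]
for \emph{every} convex body $L\subset\R^{d\times d}$. The general Giannopoulos--Hartzoulaki estimate~\eqref{GH} only yields $d\log d$ at this dimension, so the task is to shave a logarithmic factor using the bi-unitary invariance of the Schatten unit ball.

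The plan is to follow the scheme of~\cite{giannopoulos2002volume}: after placing $L$ in a suitable position (for instance an $\ell$-position or a Lewis-type position), apply a Chevet-type inequality to a standard Gaussian operator $T\colon\R^{d\times d}\to\R^{d\times d}$ so that, with high probability, $T(L)\subset B_{\SS_p^d}$ with volume as large as the Chevet bound allows. The logarithm in~\eqref{GH} arises at the single step where one estimates $\E\,\|T\|_{L\to B_{\SS_p^d}}$ by Gordon's refinement of Chevet at the level of $\ell_p^{d^2}$. We can remove it because, on the Schatten side, one has the \emph{sharp} (no-log) Gaussian estimate
\[
\E\,\|G\|_{\SS_p^d} \ll d^{1/p+1/2},
\]
valid for a standard Gaussian matrix $G\in\R^{d\times d}$ and known from the non-commutative Khintchine / Lust-Piquard inequalities, or directly from the Marchenko--Pastur description of the singular values. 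This is in stark contrast to what a naive entropy bound on $\ell_p^{d^2}$ would give.

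Putting this together with the sharp asymptotics $|B_{\SS_p^d}|^{1/d^2}\sim d^{-1/p-1/2}$ from Kabluchko--Prochno--Th\"ale~\cite{kabluchko2018exact}, and with the usual mean-width / volume-product information for $L$ in the chosen position, one balances the two sides of Chevet's inequality to reach the bound $d$. As a sanity check, in the Euclidean Schatten case $p=2$ we have $B_{\SS_2^d}=B_2^{d^2}$ and the argument should reduce to John's classical estimate $\vr(B_2^n,L)\ll\sqrt{n}=d$.

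The step I expect to be the main obstacle is the choice of position for the arbitrary body $L$: we have no structural information on it, so we must pick a position that simultaneously controls $w(L)$, $M^*(L)$ and $|L|^{1/d^2}$, and then pair these quantities with the sharp Schatten-side Gaussian estimate \emph{without} reintroducing a $\sqrt{\log d}$ loss. Verifying that the optimal tradeoff yields precisely $d$ uniformly in $p\in[1,\infty]$ --- in particular at the endpoints $p=1$ and $p=\infty$, where the geometry of $B_{\SS_p^d}$ is most asymmetric --- is the delicate point of the argument.
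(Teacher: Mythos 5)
Your lower bound is exactly the paper's (Theorem \ref{teo principal} in ambient dimension $n=d^2$), and your overall plan for the upper bound --- the Giannopoulos--Hartzoulaki random-operator scheme, a good position for the arbitrary body $L$, Chevet's inequality, and sharp Gaussian/volumetric asymptotics on the Schatten side --- is the right one. But there is a genuine gap at precisely the point you flag as ``the main obstacle'', and your diagnosis of where the logarithm in \eqref{GH} comes from is off. That $\log n$ is not produced by a lossy estimate of $\E\,\norm{T}$ ``at the level of $\ell_p^{d^2}$''; it is the $K$-convexity ($MM^*$) logarithm attached to the \emph{arbitrary} body $L$, for which no position makes $\ell(L)$ and $\ell(L^{\circ})$ simultaneously of order $\sqrt{n}$. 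So the sharp estimate $\E\,\sigma_p(G)\sim d^{1/p+1/2}$, equivalently $\ell(B_{\SS_p^d})\,|B_{\SS_p^d}|^{1/d^2}\sim 1$, is a necessary input but does not by itself explain why the log carried by $L$'s position disappears from the final bound; your ``one balances the two sides of Chevet's inequality'' is an assertion, not an argument.

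The missing idea is the cancellation mechanism. Put $L$ in Rudelson's position, so that $1/|L|^{1/n}\le \ell(L)\ll\sqrt{n}\log n$, $\ell(L^{\circ})\ll\sqrt{n}$ and, crucially, $\norm{id:\ell_2^n\to X_{L^{\circ}}}\ll\sqrt{n}/\log n$. Feeding this into the tail version of Chevet (Proposition \ref{highprobChevet}) together with Pivovarov's determinant bound $\det(A)^{1/n}\gg\sqrt{n}$ --- an ingredient you omit but need in order to lower-bound $|A(L)|$ --- yields \eqref{eq: cota combinada}: in the term $\ell(K)\,\norm{id:\ell_2^n\to X_{L^{\circ}}}$ the Schatten-side $\ell$-norm is paired with the $\sqrt{n}/\log n$ circumradius bound, so the two logarithms cancel there, while the surviving $\log n$ multiplies $\sqrt{n}\,\norm{id:\ell_2^n\to X_K}\,|K|^{1/n}$, which for $K=B_{\SS_\infty^d}$ is $\sim\sqrt{d}=n^{1/4}$, whence $\log(n)\,n^{1/4}\ll\sqrt{n}=d$. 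Your plan of attacking each $p$ directly would also close, since $\sqrt{n}\,\norm{id:\ell_2^{d^2}\to \SS_p^d}\,|B_{\SS_p^d}|^{1/d^2}\ll\sqrt{d}$ uniformly in $p\in[1,\infty]$; the paper instead treats only $p=\infty$ and transfers to every unitary invariant norm via the sandwich \eqref{relacion Schatten}, which gives $\vr(B_{\SS_p^d},B_{\SS_\infty^d})\sim 1$ and disposes of both endpoints at once.
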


Moreover, we show that this also holds for the unit ball of any unitary invariant norm in $\R^{d \times d}$ (which follows from Theorem \ref{teo principal} and Corollary  \ref{cota superior unitary}  below).

Our approach is based on
Giannopoulos-Hartzoulaki's techniques.  We  show that if $L \subset \R^n$ is an arbitrary body then with ``high probability'' we can find  transformations $T$ such that $T(L) \subset \SS_p^d$ and  $$ \left(\frac{|\SS_p^d|}{|T(L)|}\right)^{\frac{1}{d^2}} \ll d.
$$

We also study the largest volume ratio for the unit ball of full or symmetric tensor products of $\ell_p$-spaces endowed with the well-known projective and injective tensor norms. Note that these spaces are identified with bounded/nuclear linear forms or homogeneous polynomials over $\ell_{p'}$-spaces. We obtain the following.

\begin{theorem}\label{lvr tensores}
Let $1\leq p \leq \infty$ and $X = \tene, \tenes, \tenp$ or $\tenps$. Then we have 
\begin{align}
\lvr(B_X) \sim \sqrt{\dim(X)},
\end{align}
where $\dim(X)$ stands for the dimension of $X$ as a vector space.
\end{theorem}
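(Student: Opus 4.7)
The lower bound $\sqrt{\dim(X)} \ll \lvr(B_X)$ is immediate from Theorem~\ref{teo principal}, so all the work is in the matching upper bound $\lvr(B_X) \ll \sqrt{\dim(X)}$ for each of the four choices of tensor norm. The plan is to adapt the Giannopoulos--Hartzoulaki framework to the tensor-product setting, following the same template that the previous theorem uses for the Schatten classes.

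Write $N = \dim(X)$ (so $N = n^m$ in the full cases and $N = \binom{n+m-1}{m}$ in the symmetric ones). Given an arbitrary convex body $L \subset \R^N$, the aim is to produce, with high probability through a randomized construction, an affine map $T$ such that $T(L) \subset B_X$ and $(|B_X|/|T(L)|)^{1/N} \ll \sqrt{N}$. The transformation is taken of the form $T = r\, U \circ T_0$, where $T_0$ places $L$ in a suitable position (for instance an $M$-position, or a position with prescribed mean width), $U$ is a uniformly random orthogonal transformation on $\R^N$, and $r>0$ is a scaling factor tuned so that the inclusion holds.

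Three ingredients enter. First, sharp volumetric estimates for $|B_X|^{1/N}$ for each of the four tensor norms on $\ell_p^n$, which are available in the literature on volumes of unit balls of spaces of multilinear and polynomial forms. Second, a Chevet-type inequality bounding $\E_U \sup_{x \in T_0(L)} \|Ux\|_X$ by the Gaussian mean width of $T_0(L)$ multiplied by $\E \|G\|_{X^*}$, where $G$ is a standard Gaussian element of $X^*$; this expectation encodes the type/cotype behavior of the relevant tensor norm. Third, sharp asymptotic estimates of these Gaussian averages: for the full injective and projective cases via the classical theory of Gaussian averages of bounded multilinear forms, and for the symmetric cases by a polarization/symmetrization reduction to the full case, or by the symmetric multilinear Chevet inequality directly.

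Combining these, the inclusion $T(L) \subset B_X$ holds with positive probability once $r$ is of the correct order, and the quotient $(|B_X|/|T(L)|)^{1/N}$ is then forced to match $\sqrt{N}$ by a direct computation playing the volumetric estimate of step one against the Gaussian-width estimate of steps two and three. The main obstacle is step three: obtaining Chevet-type estimates for the symmetric injective and symmetric projective tensor norms on $\ell_p^n$ \emph{without the logarithmic loss} present in the general bound~\eqref{GH}. This is what ultimately replaces the $\sqrt{n}\log n$ factor of Giannopoulos--Hartzoulaki by the optimal $\sqrt{N}$, and it requires exploiting the natural symmetry group acting on $X$ (sign-changes, coordinate permutations, and, in the symmetric case, the full action of $S_m$ on tensor slots) to average out the ``bad'' directions responsible for the extra logarithm in the general case.
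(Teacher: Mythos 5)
Your overall skeleton (lower bound from Theorem~\ref{teo principal}; upper bound by randomly positioning an arbitrary $L$ inside $B_X$ via a Chevet-type estimate plus volume estimates from the literature) matches the paper's. But there is a genuine gap at the decisive point: you misidentify the source of the logarithm in~\eqref{GH} and consequently propose the wrong fix. The $\log$ in the Giannopoulos--Hartzoulaki bound does not come from the Chevet estimate for the target norm $X$; it comes from the \emph{arbitrary} body $L$, namely from the fact that the best general position of $L$ (Rudelson's position) only gives $\ell(\tilde L) \ll \sqrt{N}\log N$, and this cannot be improved for a general body. No amount of averaging over the isometry group of $X$ (sign changes, permutations, the $S_m$-action on tensor slots) touches this term, so your ``step three'' --- a log-free Chevet inequality for the symmetric tensor norms obtained by symmetrization --- does not address the actual obstruction and is not what is needed.

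What actually removes the logarithm is an observation about $K=B_X$ alone. Writing out the combined bound~\eqref{eq: cota combinada} (the ordinary Gaussian Chevet tail bound of Proposition~\ref{highprobChevet} applied to a Gaussian matrix $A$, together with $\det(A)^{1/N}\gg\sqrt N$), one gets, with $N=\dim(X)$ and $L$ in Rudelson position,
\begin{align*}
\left(\frac{\norm{A}\,|B_X|}{|A(L)|}\right)^{\frac{1}{N}} \ll \ell(B_X)\,|B_X|^{\frac{1}{N}}\sqrt{N} \;+\; (\log N + u)\,\sqrt{N}\,\norm{id:\ell_2^N\to X}\,|B_X|^{\frac{1}{N}}.
\end{align*}
The Defant--Prengel estimates, checked case by case for all four norms and all $p$, give $\ell(B_X)\,|B_X|^{1/N}\sim 1$ (so the first term is $\sqrt N$) and $\norm{id:\ell_2^N\to X}\,|B_X|^{1/N}\ll 1/\log N$, which absorbs the unavoidable $\log N$ in the second term and permits $u$ as large as $\bigl(\norm{id:\ell_2^N\to X}\,|B_X|^{1/N}\bigr)^{-1}$. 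Your proposal never records these two products, and without them the argument stalls exactly where the general bound~\eqref{GH} does. A smaller issue: your Chevet bound ``mean width of $T_0(L)$ times $\E\norm{G}_{X^*}$'' has the wrong form --- Chevet is a \emph{sum} of two width-times-radius terms, not a product of two widths --- and it is precisely the radius factor $\norm{id:\ell_2^N\to X}$ multiplying $\ell(L^{\circ})$ that makes the mechanism above work.
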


Recall that a convex body $K$ is called unconditional if for every choice of signs $(\varepsilon_k)_{k=1}^n \subset~\{-1,+1\}^n$, the vector $(\varepsilon_1 x_1, \dots, \varepsilon_{n} x_n)$ lies in $K$ if and only if $(x_1, \dots, x_n)$ is  in $K$. 
We also study the asymptotic behaviour of the largest volume ratio for unconditional bodies.
\begin{theorem}\label{teo incond}
Let $K \subset \R^n$ be an unconditional convex body.  Then,
\begin{align}
    \lvr(K) \sim \sqrt{n}.
\end{align}

\end{theorem}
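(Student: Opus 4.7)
The lower bound $\sqrt{n}\ll\lvr(K)$ is immediate from Theorem~\ref{teo principal}, which applies to every convex body, so the entire content lies in the matching upper bound: for every convex body $L\subset\R^n$ I must produce an affine image $T(L)\subseteq K$ with $(|K|/|T(L)|)^{1/n}\ll\sqrt{n}$. My plan is to revisit the Giannopoulos--Hartzoulaki strategy behind~\eqref{GH} and exploit the invariance of $K$ under coordinate sign changes to eliminate the $\log n$ loss of the generic bound.

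First I would normalize $K$: since $\vr$ is invariant under invertible linear transformations of either argument, I may rescale coordinates to arrange $\|e_i\|_K=1$ for all $i$, which yields $B_1^n\subseteq K\subseteq B_\infty^n$ and makes $K$ invariant under the sign-flip group $\mathcal G=\{D_\varepsilon:\varepsilon\in\{-1,+1\}^n\}$. With $L$ placed in an $\ell$-position normalized so that $|L|=1$, and in the probabilistic spirit used for Theorem~\ref{teo principal}, I would look for a realization of the random affine map $T=r\,U\,D_\varepsilon$, where $U$ is Haar-distributed on $O(n)$, $\varepsilon$ is uniform on $\{-1,+1\}^n$, and $r>0$ is a deterministic scale. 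The inclusion $T(L)\subseteq K$ is equivalent to
\[
\sup_{x\in L}\|U D_\varepsilon x\|_K\le 1/r,
\]
so the task reduces to estimating the expectation of this supremum.

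The key ingredient is a Chevet-type bound for the two-stage randomization $UD_\varepsilon$. Conditioning on $U$ and averaging over $\varepsilon$ first, the unconditional structure of $\|\cdot\|_K$ allows one to invoke the contraction principle and to replace the Gaussian mean width---which is responsible for the $\log n$ loss in~\eqref{GH}---by a coordinate-adapted Rademacher average. Combining this with the Chevet/Gordon estimate for the outer orthogonal randomization and standard concentration, one obtains a realization $(U,\varepsilon)$ and a scale $r$ of the right order (calibrated via Urysohn's inequality applied to the $\ell$-position of $L$) such that $T(L)\subseteq K$; the computation of $|T(L)|=r^n|L|$ is immediate, giving $(|K|/|T(L)|)^{1/n}\ll\sqrt{n}$. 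The main obstacle I expect is the sharp calibration of this two-layer estimate: one must show that the inner Rademacher contraction saves \emph{exactly} the logarithmic factor, while the outer orthogonal average contributes only the $\sqrt{n}$ coming from the volume/mean-width trade-off. As a pleasant byproduct of the probabilistic construction, a positive-probability set of random affine images of $L$ realizes the bound, in line with the ``with high probability'' framework announced in the introduction.
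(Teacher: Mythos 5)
Your lower bound is fine (Theorem~\ref{teo principal} applies verbatim), but the upper bound as proposed does not go through, and the gap is structural rather than technical. First, the two-stage randomization $T=rUD_\varepsilon$ is vacuous: since Haar measure on $O(n)$ is right-invariant and $D_\varepsilon\in O(n)$, the matrix $UD_\varepsilon$ has exactly the same distribution as $U$, so conditioning on $U$ and averaging over $\varepsilon$ cannot feed the unconditionality of $K$ into the estimate. Second, even granting some version of the Rademacher/contraction step, the $\log n$ in \eqref{GH} does not sit where you think it does. Running the Chevet bound for the pair $(K,L)$ one is stuck with the term $\ell(K)\,|K|^{1/n}$, and for the unconditional body $K=B_\infty^n$ one has $\ell(B_\infty^n)=\E\max_i|g_i|\sim\sqrt{\log n}$ while $|B_\infty^n|^{1/n}=2$, so $\ell(K)|K|^{1/n}\sim\sqrt{\log n}$: any mean-width/Gaussian-averaging argument of this type loses at least $\sqrt{\log n}$ for the cube, no matter how $L$ is positioned. (Your normalization $B_1^n\subseteq K\subseteq B_\infty^n$ also points the wrong way: for the upper bound one needs $K$ to \emph{contain} a cube of essentially full volume, not to be contained in one.)

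The paper avoids Chevet entirely for this class. After reducing to centrally symmetric $L$ via Rogers--Shephard (Equation~\eqref{bastasimetricos}), it puts $K$ in isotropic position and invokes Bobkov--Nazarov (Proposition~\ref{contencion}), which gives $\frac{1}{2\sqrt{\pi e}}B_\infty^n\subseteq K$ and hence $\vr(K,B_\infty^n)\sim 1$; then the Dvoretzky--Rogers parallelepiped $P\supseteq L$ with $(|P|/|L|)^{1/n}\ll\sqrt n$ gives $\vr(B_\infty^n,L)\ll\sqrt n$, and submultiplicativity (Remark~\ref{propiedades elementales}\,(4)) concludes. The probabilistic flavour you were after is recovered differently: Theorem~\ref{teo: random dvoretzky} builds the parallelepiped as $P=T^{-1}(B_\infty^n)$ with $T=\sum_j X_j\otimes e_j$ and $X_j$ sampled uniformly from the isotropic body $L^\circ$, controlling $|\det T|^{1/n}$ from below via Pivovarov's determinant estimate. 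If you want to salvage a randomized proof, that is the mechanism to use: a lower bound on a random determinant together with the trivial inclusion $T(L)\subseteq B_\infty^n$, not a Gaussian operator-norm bound.
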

The fact that $\lvr(K) \ll \sqrt{n}$ if $K$ is unconditional might be known for experts (although, as far as we know, is not explicitly stated elsewhere) and is a consequence of a mixture of the existence of Dvoretzky-Rogers' parallelepiped and a result of Bobkov-Nazarov.
As a result of a theorem of Pivovarov, we present a random version of Dvoretzky-Rogers' parallelepiped construction (which we believe is interesting in its own right) and show that, if $K \subset \R^n$ is unconditional and  $L \subset \R^n$ is an arbitrary body, then with ``high probability'' we can find  transformations $T$ such that $T(L) \subset K$ and  $$ \left(\frac{|K|}{|T(L)|}\right)^{\frac{1}{n}} \ll \frac{\sqrt{n}}{L_{L^\circ}},
$$
where $L_{L^{\circ}}$ stands for the isotropic constant of the polar body $L^{\circ}$.

\section{Preliminaries}

If $(a_{n})_{n}$ and $(b_{n})_{n}$ are two sequences of real numbers we write $a_{n} \ll b_{n}$ if there
exists an absolute constant $c>0$ (independent of $n$) such that $a_{n} \leq c b_{n}$ for every $n$.
We write $a_{n} \sim b_{n}$ if $a_{n} \ll b_{n}$ and $b_{n} \ll a_{n}$.
We denote by $e_1, \dots, e_n$ the canonical vector basis in $\R^n$ and by $S^{n-1}$, the unit sphere in $\R^n$.
We denote by $\absconv \{X_1, \dots, X_m \}$ the absolute convex hull of the vectors  $X_1, \dots, X_m$. That is,
$$\absconv\{X_1, \dots, X_m\} : = \left\{ \sum_{i=1}^m a_i X_i : \sum_{i=1}^m \vert a_i \vert \leq 1 \right\} \subset \R^n.$$

A convex body $K \subset \R^n$ is a compact convex set with non-empty interior. If $K$ is centrally symmetric (i.e., $K = -K$) we denote by $X_K$ the norm space $(\R^n, \Vert \cdot \Vert_{X_K})$ that has $K$ as its unit ball.

The polar set of $K$, denoted by $K^{\circ}$, is defined as
\begin{align*}
K^{\circ}= \{ x \in \R^n : \langle x, y \rangle \leq 1 \mbox{ for all } y\in K \}.
\end{align*}

The following result relates the volume of a body with the volume of it polar and is due to Blaschke-Santal\'o and Bourgain-Milman \cite[Theorem 1.5.10 and Theorem 8.2.2]{artstein2015asymptotic}:
{\sl
If $K$ is centrally symmetric then
\begin{align} \label{santalo}
\vn{K}\vn{K^{\circ}}  \sim \frac{1}{n}\cdot
\end{align}
}

A probability measure $\mu$ on $\R^n$ is isotropic if its center of mass is the origin
\begin{align*}
    \int_{\R ^n}\langle x,\theta\rangle \, d\mu(x)=0\mbox{ for every } \theta\in S^{n-1},
\end{align*}
and
\begin{align}
    \int_{\R ^n}\langle x,\theta\rangle^2 \, d\mu(x)=1 \mbox{ for every } \theta\in S^{n-1}.
\end{align}
A convex body $K \subset \R^n$ is said to be in isotropic position (or simply, is isotropic) if  it has volume one and its uniform measure is, up to an appropriate re-scaling, isotropic. In that case, its isotropic constant $L_K$, is given by
$$
L_K:= \left(\int_K x_1^2 \; dx\right)^{1/2}.
$$

Given a convex body $K$ in $\R^n$ with center of mass at the origin, there exists $A \in GL(n)$ such that $A(K)$ is isotropic \cite[Proposition 10.1.3]{artstein2015asymptotic}. Moreover, this isotropic image is unique up to orthogonal transformations; consequently, the isotropic constant $L_K$ results an invariant of the linear class of $K$. In some sense, the isotropic constant $L_K$ measures the spread of a convex
body $K$.

For a centrally symmetric convex body $K \subset \R^n$, its $\ell$-norm is defined as
$$\ell(K) := \int_{\R^n} \Vert (x_1, \dots, x_n) \Vert_{X_K} d\gamma(x),$$
where $d\gamma$ is the standard Gaussian probability in $\R^n$. For more information about this parameter see \cite[Chapter 12]{tomczak1989banach}. Recall that
$\ell(K) \sim \sqrt{n} w(K^{\circ}),$ where $w(\cdot)$ stands for the mean width (see \cite[Chapter 1.5.5]{artstein2015asymptotic}). 

Given a convex body $K$, $\iso(K)$ is the set of isometries of $K$, that is set of orthogonal transformations $O$ such that $O(K) = K$. We say that $K$ has enough symmetries if the only operator that commutes with every $T \in \iso (K)$ is the identity operator.
A convex body with enough symmetries is almost in John position \cite[Proposition 4.8]{aubrun2017alice}. More precisely, $\norm{id: \ell_2^n \to X_K}^{-1} B_2^n$ is the maximal volume ellipsoid contained in $K$. That means that if $K$ has enough symmetries then
\begin{align}\label{vr simetrias}
\vr(K)= \vr(K,B_2^n) \sim \norm{id:\ell_2^n \to X_K}\sqrt{n}|K|^{\frac{1}{n}}.
\end{align}

Natural examples of  convex bodies with enough symmetries are the unit balls of the Schatten classes. Given a matrix $T \subset \R^{d\times d}$ consider $s(T) = (s_1(T),\dots,s_d(T))$  the sequence of eigenvalues of $(TT^{*})^{\frac{1}{2}}$ (the singular values of $T$). The $p$-Schatten norm of $T \in \R^{d \times d}$ is defined as
\begin{align}
\sigma_p(T) = \norm{s(T)}_{\ell_p^d};
\end{align}
that is, the $\ell_p$-norm of the singular values of $T$. The $p$-Schatten norm arises as a generalization of the classical Hilbert-Schmidt norm. The analysis of the Schatten norm has a long tradition in local Banach space theory and their properties are widely studied. We denote by  $B_{\SS^d_p} \subset \R^{d \times d}$ the unit ball of $(\R^{d \times d}, \sigma_p)$.

\bigskip

We now review the basics definitions regarding tensor products.  We refer to \cite{defant1992tensor,floret1997natural} for a complete treatment on the subject. 
Given a normed space $E$ we write $\bigotimes^m E$ for the $m$-fold tensor product  of $E$, and $\bigotimes^{m,s} E$ for the symmetric $m$-fold tensor product, that is, the subspace of $\bigotimes^m E$ consisting of all tensor that can be written as $\sum_{i= 1} ^k \lambda_i \otimes^m x_i$, where $\lambda_i \in \R$ and $\otimes^m x_i = x_i \otimes \dots \otimes x_i$.
Observe that if $E$ has dimension $n$, $\dim(\bigotimes^m E) = n^m$ and $\dim(\bigotimes^{m,s} E) = \binom{m + n - 1}{n - 1}$. Since we consider $m$ as a fixed number, we have that in both cases the dimension of the space behaves like $n^m$ for $n$ large.

There are many norms than can be defined on the tensor product, we will focus on two of them. 
The projective tensor norm is defined as
\begin{align*}
\pi(x) = \inf\left\{\sum_{j=1}^r \prod_{i=1}^m\norm{x^r_i}_E  \right\},
\end{align*}
where the infimum is taken over all representations of $x$, $x = \sum_{i=1}^r x_1 \otimes \dots \otimes x_m$.
The injective tensor norm is defined as
\begin{align*}
\varepsilon(x) = \sup \left|\sum_{j=1}^r\prod_{i=1}^m|\varphi_i(x_i)| \right|,
\end{align*}
where the supremum runs over all $\varphi_1,\dots,\varphi_m \in E'$ and $\sum_{j=1}^r x_1 \otimes \dots \otimes x_m$ is a fixed representation of $x$.
Let $\alpha = \varepsilon$ or $\pi$, we write $\bigotimes_{\alpha} ^m$ for $m$-fold product endowed with the norm $\alpha$.

In the same way we can define the corresponding projective and injective norms in the symmetric setting. The symmetric projective norm is given by
\begin{align*}
\pi_s(x) :=  \inf\left\{\sum_{i=1}^r \norm{x_i}_E^m \right\},
\end{align*}
where the infimum is taken over all the representation of $x$ of the form $x = \sum_{i=1}^r \otimes^m x_i$.

The symmetric injective norm is computed as follows,
\begin{align*}
\varepsilon_s (x)= \sup_{\varphi \in B_{E'}} \left| \sum_{i=1}^r \varphi(x_i)^m \right|,
\end{align*} 
where $ x= \sum_{i=1}^r \otimes ^m x_i$ is a fixed representation of $x$.

\bigskip
We now recall some basic properties of the volume ratio defined in Equation~\eqref{volumeratiogeneralizado} which can easily be found in \cite{khrabrov2001generalized}.

\begin{remark} \label{propiedades elementales}
For every pair of centrally symmetric convex bodies $(K,L)$ in $\R^n.$ the following holds:
\begin{enumerate}
\item $$\vr(K,L) = \left(\frac{|K|}{|L|}\right)^{\frac{1}{n}} \cdot \inf_{T \in SL(n,\R)} \Vert T: X_L \to X_K \Vert,$$ where the infimum runs all over the linear transformations $T$ that lie on the special linear group of degree $n$ (matrices of determinant one).

\item $\vr(K,L) \sim  \vr(L^{\circ},K^{\circ})$.
\item If  $T :X_L \to X_K$ is a linear operator we have that $\frac{1}{\norm{T: X_L \to X_K}} \cdot T(L) \subset K$ and so
\begin{align*}
\vr(K,L) &\leq  \frac{\norm{T: X_L \to X_K}\vn{K}}{\vert \det{T} \vert^{\frac{1}{n}}\vn{L}}.
\end{align*}
\item $\vr(K,L) \leq \vr(K,Z) \cdot \vr(Z,L)$ for every convex body $Z$ in $\R^n$.
\item $\vr(K,L)=\vr(T(K),S(L))$, for any affine transformations $T$ and $S$. In other words, the volume ratio between $K$ and $L$ depends exclusively on the affine classes of the bodies involved.
\end{enumerate}
\end{remark}

Notice that by Rogers-Shephards inequality, for every convex body $L \subset \R^n$ we have $\vr(L-L,L) \leq 4$. Therefore, by the last property
$$\vr(K,L) \leq \vr(K,L-L) \cdot 4.$$
Thus, the largest volume ratio of the body $K$ can be estimated by considering the supremum over all symmetric bodies. Precisely,
\begin{align}\label{bastasimetricos}
\lvr(K) \leq 4 \sup_{L \subset \R^n} \vr(K,L),
\end{align}
where the $\sup$ runs over all the \emph{centrally symmetric convex   bodies} $L$.
This will be useful since it allow us to deal only with  bodies which are centrally symmetric.
\section{Lower bound for the largest volume ratio}

We now treat lower bounds for the largest volume ratio of a given convex body $K$.
Recall the statement of Remark \ref{propiedades elementales} (1),

$$\vr(K,L) = \left(\frac{|K|}{|L|}\right)^{\frac{1}{n}} \cdot \inf_{T \in SL(n,\R)} \Vert T: X_L \to X_K \Vert.$$

Therefore, to show ``good'' lower bounds for $\lvr(K)$ we need a body $L$ such that its volume is ``small'' and the norm $\Vert T: X_L \to X_K \Vert$ is large for every operator $T \in SL(n,\R)$.

The key idea of \cite{khrabrov2001generalized} is to use the probabilistic method.
Namely, Khrabrov considered
the random body (based on Gluskin's work \cite{gluskin1981diameter})
\begin{align}\label{def random politope}
L^{(m)}:=\absconv\{X_1, \dots, X_m, e_1, \dots, e_n \},
\end{align}
where $\{X_i\}_{i=1}^m$ are independent vectors distributed according to the normalized Haar measure in $S^{n-1}$.

\begin{figure}[h]
\begin{center}
\begin{tikzpicture}
\definecolor{greet}{RGB}{50,0,50}
\begin{scope}[scale=2]
 \draw (0,0) circle (1cm);
\def\puntos{(1,0),(0,-1),(-1,0),(0,1)} 
\def\us{1/0,0.59/-0.81,0/-1,-0.81/-0.58,-1/0,-0.59/0.81,0/1,0.81/0.58}
\foreach \x/\y [count=\i from 1] in \us {
   \coordinate (u\i) at (\x,\y);
 } 
\draw (u1)--(u2)--(u3)-- (u4)--(u5)--(u6)--(u7)--(u8)--cycle;
\fill[greet,opacity=0.3] (u1)--(u2)--(u3)-- (u4)--(u5)--(u6)--(u7)--(u8)--cycle;
\node at (u1) [right = 1 mm] {$e_1$}; 
\node at (u2) [below = 2.5 mm,right = 0.5  mm] {$X_1$};
\node at (u3) [below = 1 mm] {$-e_2$};
\node at (u4) [left = 4.5 mm, below= 0.5 mm] {$-X_2$};
\node at (u5) [left = 1 mm] {$-e_1$};
\node at (u6) [above = 3 mm,left=0.1 mm] {$-X_1$};
\node at (u8) [above = 2 mm,right = 1 mm] {$X_2$};
\node at (u7) [above = 1 mm] {$e_2$}; 
 \end{scope}

\end{tikzpicture}
\caption{Random polytope $L^{(2)}$ in $\R^2$.}
\end{center}
\end{figure}
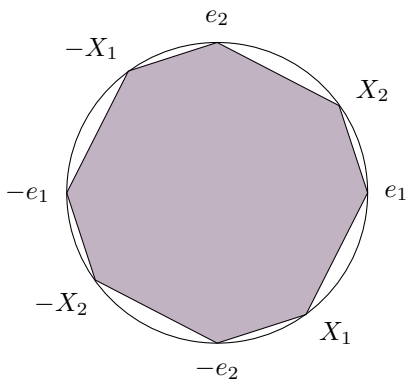

Note that as $m$ grows,   $\inf_{T \in SL(n,\R)} \Vert T: X_{L^{(m)}} \to X_K \Vert$  becomes larger but  $\frac{1}{\vert L^{(m)} \vert^{1/n}}$ decreases, so there is some sort of trade-off.

It should be noted that the volume of the random polytope $L^{(m)}$ is bounded by (see \cite{barany1988approximation})
\begin{align}\label{volumen politopo}
    \vn{L^{(m)}} \ll  \frac{\sqrt{\log(\frac{m}{n})}}{n}.
\end{align}
In fact, this bound is the exact asymptotic growth of $\vn{L^{(m)}}$  with probability greater than or equal to $1- \frac{1}{m}$ \cite[Chapter 11]{brazitikos2014geometry}.

 In \cite{khrabrov2001generalized}, for $m=n \log(n)$, it is shown that, with high probability, the norm $\norm{T: X_{L^{(m)}} \to X_K}$ is ``large'' for every $T \in SL(n,\R)$.
To achieve all this he proved the following interesting inequality:
\bigskip

{\sl If $K\subset \R^n$ is in L\"owner position then for very $m \in \N$ and every $\beta>0$,
\begin{align}\label{desig khrabrox}
\mathbb P  &\left\{ \mbox{There exists }   T\in SL(n,\R)  :  \norm{T: X_{L ^{(m)}}\to X_K} \leq  \beta \left(\frac{\vert B_2^n \vert}{\vert K \vert} \right)^{1/n}  \right\}
\\
 \nonumber & \leq \left(C \sqrt{n}  \right)^{n^2} \left(\frac{\vert B_2^n \vert}{\vert K \vert} \right)^{n} \beta^{nm-n^2}.
\end{align}
}

In order to prove our main contribution, Theorem \ref{teo principal}, we present the following refinement of the previous estimate.

\begin{proposition}\label{main prop}
Let $K\subset \R^n$ be a centrally symmetric convex body and  $L^{(m)}$ the random polytope defined in \eqref{def random politope}, then for every $\beta>0$ we have
\begin{align*}
\mathbb P & \left\{ \mbox{There exists }  T\in SL(n,\R)  :  \norm{T: X_{L^{(m)}} \to X_K} \leq  \beta \left(\frac{\vert B_2^n \vert}{\vert K \vert} \right)^{1/n}  \right\}
\\
 & \leq C^{n^2} \left(\norm{id:\ell_2^n \to X_K}\sqrt{n}|K|^{\frac{1}{n}}\right)^{n^2} (2\beta)^{nm}.
\end{align*}
\end{proposition}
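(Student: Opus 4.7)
The plan is to refine Khrabrov's net argument by replacing his Euclidean covering with a volumetric covering in the $K$-dependent operator norm. Set $\alpha:=\beta(|B_2^n|/|K|)^{1/n}$; the event to bound is the existence of $T\in SL(n,\R)$ with $T(e_j)\in\alpha K$ for each $j$ and $T(X_i)\in\alpha K$ for each $i$.

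First I would build a net $\NN$ covering $\TT:=\{T\in\R^{n\times n}:T(e_j)\in\alpha K \text{ for all } j\}$ at resolution $\alpha$ with respect to the operator norm $\norm{\cdot:\ell_2^n\to X_K}$. Writing $\mathcal{B}_K$ for its unit ball (as a subset of $\R^{n^2}$), a standard volumetric estimate yields $|\NN|\leq\bigl(3|\TT|^{1/n^2}/(\alpha|\mathcal{B}_K|^{1/n^2})\bigr)^{n^2}$. The crucial ingredient is a lower bound on $|\mathcal{B}_K|^{1/n^2}$: from the pointwise inequality $\norm{v}_{X_K}\leq\norm{id:\ell_2^n\to X_K}\norm{v}_2$ one obtains the set-inclusion $\mathcal{B}_K\supset\norm{id:\ell_2^n\to X_K}^{-1}\mathcal{B}_{op}$, where $\mathcal{B}_{op}$ is the unit ball of the standard operator norm on $\R^{n\times n}$. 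Combined with Szarek's classical estimate $|\mathcal{B}_{op}|^{1/n^2}\sim n^{-1/2}$ and the identity $|\TT|^{1/n^2}=\alpha|K|^{1/n}$, this gives
$$|\NN|\leq C^{n^2}\bigl(\norm{id:\ell_2^n\to X_K}\sqrt{n}\,|K|^{1/n}\bigr)^{n^2},$$
which matches exactly the first factor in the target bound.

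Next I would use the net to reduce to a per-element probability. If $T\in\TT\cap SL(n,\R)$ witnesses the event and $T'\in\NN$ satisfies $\norm{T-T':\ell_2^n\to X_K}\leq\alpha$, then $\norm{T'X_i-TX_i}_{X_K}\leq\alpha\norm{X_i}_2=\alpha$, so $T'(X_i)\in 2\alpha K$. The union bound gives $\Pro(\text{event})\leq|\NN|\cdot\max_{T'\in\NN}\Pro(T'X\in 2\alpha K)^m$. For invertible $T'$, applying the polar-coordinates inequality $\Pro(X\in C)\leq|C\cap B_2^n|/|B_2^n|$ (which follows from $\ind_{\{\rho_C\geq 1\}}\leq\rho_C^n$ on $S^{n-1}$) to $C=(T')^{-1}(2\alpha K)$ yields
$$\Pro(T'X\in 2\alpha K)\leq\frac{(2\alpha)^n|K|}{|\det T'|\,|B_2^n|}=\frac{(2\beta)^n}{|\det T'|}.$$
Restricting attention to net elements that actually approximate some $T\in SL(n,\R)$, a determinant-Lipschitz bound using the approximation forces $|\det T'|\geq 1/2$, so this probability is bounded by $2(2\beta)^n$, and its $m$-th power can be absorbed into the outer $C^{n^2}$-factor as $(2\beta)^{nm}$.

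The main difficulty is the determinant control in the last step. Khrabrov sidesteps it by working in L\"owner position of $K$, which makes the various operator norms uniformly comparable to Euclidean operator norms up to factors of $\sqrt{n}$, enabling a clean argument but yielding only the factor $(\sqrt{n})^{n^2}$. Here, by retaining the intrinsic $K$-dependent norm throughout, the sharper dependence on $\norm{id:\ell_2^n\to X_K}$ drops out of Szarek's volume estimate; the price is that the determinant-Lipschitz step now requires carefully tracking how an operator-norm perturbation translates into a perturbation of matrix entries (via $\norm{id:X_K\to\ell_2^n}$), and this is where the main technical bookkeeping lies.
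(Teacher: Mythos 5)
Your overall architecture (a net for the set of matrices with columns in $\alpha K$, taken with respect to the $\LL(\ell_2^n,X_K)$ metric at resolution $\alpha$, followed by a union bound and a polar-coordinates estimate for the per-vertex probability) is the same as the paper's, and your cardinality count is correct: the identity $|\TT|^{1/n^2}=\alpha|K|^{1/n}$, the inclusion of $\norm{id:\ell_2^n\to X_K}^{-1}\mathcal{B}_{op}$ into the unit ball of $\LL(\ell_2^n,X_K)$, and the volume of the operator-norm ball together give exactly $C^{n^2}\left(\norm{id:\ell_2^n\to X_K}\sqrt{n}|K|^{1/n}\right)^{n^2}$. This is in fact a more elementary route to the net bound than the paper's, which lower-bounds the volume of the unit ball of $\LL(\ell_2^n,X_K)$ via spherical coordinates, H\"older, and Chevet's inequality (reducing to $\ell(K)\leq\norm{id:\ell_2^n\to X_K}\sqrt{n}$); both yield the same estimate.

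The genuine gap is the determinant step. You cover all of $\TT$, so your net elements $T'$ need not have determinant near $1$, and the claimed ``determinant-Lipschitz bound'' forcing $|\det T'|\geq 1/2$ from $\norm{T-T':\ell_2^n\to X_K}\leq\alpha$ is false in general: translating this operator-norm perturbation into a perturbation of matrix entries costs a factor $\norm{id:X_K\to\ell_2^n}$, which is unbounded over the class of bodies considered, and $\alpha=\beta(|B_2^n|/|K|)^{1/n}$ need not be small (the proposition is asserted for every $\beta>0$). Since $\det$ is only Lipschitz on bounded sets with a constant depending on those same uncontrolled quantities, no choice of absolute constants rescues the step; and the determinant really is essential, because $\Pro(T'X\in 2\alpha K)$ can be close to $1$ when $\det T'$ is small. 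The paper's fix costs nothing and is the one you should adopt: define the set to be covered as $\MM^K_{\gamma}=\{T\in SL(n,\R):\norm{T:\ell_1^n\to X_K}\leq\gamma\}$ and take the net to be a maximal $\gamma$-separated subset of $\MM^K_{\gamma}$ itself. The packing argument is unchanged (the disjoint balls $\xi+\frac{\gamma}{2}U$ still sit inside $\frac{3}{2}\{\norm{T:\ell_1^n\to X_K}\leq\gamma\}$ because $\gamma U$ is contained in that set), every net element has determinant exactly one, and the per-element bound becomes exactly $\left((2\alpha)^n|K|/|B_2^n|\right)^m=(2\beta)^{nm}$ with no spurious $2^m$ factor to absorb.
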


To prove Proposition~\ref{main prop} we need a couple of lemmas.
The first one is a technical tool which bounds the number of points in an $\varepsilon$-net for an adequate set. This should be compared with  \cite[Lemma 5]{khrabrov2001generalized}: note that the set and the metric differ. This subtle but important modification is the key ingredient we need.
\begin{lemma}\label{lemarednuevo}
Let $K\subset \R^n$ be a convex body, $\gamma >0$ and
\begin{align*}
\MM^K_{\gamma}:=\left\{T \in SL(n,\R) \; and \;  \norm{T: \ell_1^n \to X_K} \leq \gamma\right\}.
\end{align*}
There is a $\gamma$-net, $\NN_{\gamma}^K$ for $\MM^K$ in the metric $\LL(\ell_2^n,X_K)$ such that
\begin{align*}
\#\NN_{\gamma}^K \leq C^{n^2}  \left(\norm{id:\ell_2^n \to X_K}\sqrt{n} |K|^{1/n}\right)^{n^2}.
\end{align*}
\end{lemma}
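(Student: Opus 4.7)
My plan is to obtain the net directly from a standard volumetric covering-number estimate. Set $\BB:=B_{\ell_2^n\to X_K}$, the unit ball of $\LL(\ell_2^n,X_K)$, regarded as a centrally symmetric convex body inside $\R^{n\times n}\cong\R^{n^2}$. The inequality
$$\#\NN_\gamma^K \le N(\MM_\gamma^K,\gamma\BB)\le \frac{\vol_{n^2}\bigl(\MM_\gamma^K+\tfrac{\gamma}{2}\BB\bigr)}{\vol_{n^2}\bigl(\tfrac{\gamma}{2}\BB\bigr)}$$
reduces the lemma to two independent tasks: an upper bound on the numerator and a lower bound on the denominator. (That $\MM_\gamma^K$ sits inside $SL(n,\R)$ and hence has $n^2$-dimensional Lebesgue measure zero is harmless, since the volumetric bound only uses disjointness of translates of $\tfrac{\gamma}{2}\BB$ along a maximal $\gamma$-separated subset of $\MM_\gamma^K$.)

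For the numerator I would identify each matrix $T$ with the tuple of its columns $(Te_1,\dots,Te_n)\in(\R^n)^n$. The condition $\|T:\ell_1^n\to X_K\|\le\gamma$ is precisely $Te_i\in\gamma K$ for every $i$, so $\MM_\gamma^K\subseteq(\gamma K)^n$; likewise any $T\in\BB$ sends $e_i\in B_2^n$ into $K$, so $\BB\subseteq K^n$. Summing column by column,
$$\MM_\gamma^K+\tfrac{\gamma}{2}\BB \subseteq (\gamma K)^n+\bigl(\tfrac{\gamma}{2}K\bigr)^n = \bigl(\tfrac{3\gamma}{2}K\bigr)^n,$$
whose volume is $(3\gamma/2)^{n^2}|K|^n$.

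For the denominator the crucial step is the factorisation inclusion
$$\BB \supseteq \frac{1}{\|id:\ell_2^n\to X_K\|}\,B_{\ell_2^n\to\ell_2^n},$$
which is immediate from $\|Tx\|_{X_K}\le\|id:\ell_2^n\to X_K\|\cdot\|T\|_{\ell_2\to\ell_2}\cdot\|x\|_2$. This reduces matters to lower bounding the $\R^{n^2}$-volume of the operator-norm ball $B_{\ell_2^n\to\ell_2^n}$ (the unit ball of the Schatten $\infty$-class), and this is where I expect the main subtlety to lie: the naive containment $B_{\ell_2^n\to\ell_2^n}\supseteq B_{\mathrm{HS}}$ only yields $\vol^{1/n^2}\gg 1/n$, which would inflate the final bound by a factor $n^{n^2/2}$ and be hopelessly loose. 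Instead one must invoke the sharp asymptotic $\vol_{n^2}(B_{\ell_2^n\to\ell_2^n})^{1/n^2}\gg 1/\sqrt n$, a classical fact on the volume of Schatten balls (see e.g.\ \cite{kabluchko2018exact}); plugging it in gives $\vol_{n^2}(\tfrac{\gamma}{2}\BB)\gg(\gamma/(\|id\|\sqrt n))^{n^2}$.

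Dividing the two estimates produces
$$\#\NN_\gamma^K \le C^{n^2}\bigl(\|id:\ell_2^n\to X_K\|\sqrt n\,|K|^{1/n}\bigr)^{n^2},$$
as required. As a sanity check, the factor $\|id:\ell_2^n\to X_K\|\sqrt n\,|K|^{1/n}$ is essentially an upper bound for $\vr(K)$, and it is of order one when $K$ is in L\"owner position; in that case the estimate collapses to the bound $C^{n^2}$ of Khrabrov's original \cite[Lemma 5]{khrabrov2001generalized}.
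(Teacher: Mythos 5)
Your proposal is correct, and its skeleton --- take a maximal $\gamma$-separated subset of $\MM_\gamma^K$, bound its cardinality by the volume of a $\tfrac{\gamma}{2}$-enlargement divided by the volume of a $\tfrac{\gamma}{2}$-ball, and control the numerator via the column-wise inclusion $\MM_\gamma^K+\tfrac{\gamma}{2}\BB\subseteq\left(\tfrac{3\gamma}{2}K\right)^n$ --- is exactly the paper's. Where you genuinely diverge is in the lower bound for the volume of the unit ball of $\LL(\ell_2^n,X_K)$ (your $\BB$, the paper's $U$), which is indeed the crux. The paper passes to polar coordinates to write $|U|/|B_2^{n^2}|$ as $\int_{S^{n^2-1}}\norm{T}_{\LL(\ell_2^n,X_K)}^{-n^2}\,d\sigma(T)$, applies H\"older to reduce this to the second moment of the norm on the sphere, controls that moment by comparison with Gaussian means and the Gaussian Chevet inequality, and finally absorbs the resulting $\ell(K)$ term using $\ell(K)\le\norm{id:\ell_2^n\to X_K}\sqrt{n}$. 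You instead use the factorisation inclusion $\norm{id:\ell_2^n\to X_K}^{-1}B_{\SS_\infty^n}\subseteq U$ together with the classical asymptotics $|B_{\SS_\infty^n}|^{1/n^2}\sim n^{-1/2}$, a fact the paper itself invokes from \cite[Lemma 4.3.2]{brazitikos2014geometry} in its Schatten section. Both routes land on $|U|^{1/n^2}\gg\left(\norm{id:\ell_2^n\to X_K}\sqrt{n}\right)^{-1}$ and hence on the stated cardinality bound; yours trades Chevet's inequality for the Schatten-$\infty$ volume computation, which makes the argument somewhat more elementary and self-contained here, while the paper's route has the advantage of not importing that volume estimate. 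One small inaccuracy in your closing sanity check, immaterial to the proof: the quantity $\norm{id:\ell_2^n\to X_K}\sqrt{n}\,|K|^{1/n}$ compares $K$ with its largest \emph{inscribed Euclidean ball}, not with its John ellipsoid, so it dominates $\vr(K)$ but need not be of order one in L\"owner position (for the cube it is of order $\sqrt{n}$).
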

\begin{proof}
Let $U$ be the unit ball of $\LL(\ell_2^n,X_K)$. By the standard identification we consider $\MM^K_{\gamma}$ and $U$ as subsets of $\R^{n\times n}$. Let $\NN_{\gamma}^K$ be a maximal collection of elements of $\MM^K_{\gamma}$ $\gamma$-separated. These elements form an $\gamma$-net and, for every $\xi \in \NN_{\gamma}^K$, the balls $\xi + \frac{\gamma}{2}U$ are disjoints. Since
\begin{align*}
    \norm{T: \ell_1^n \to X_K} \leq  \norm{T: \ell_2^n \to X_K},
\end{align*}
we have that $\gamma U \subset \left\{T:  \norm{T: \ell_1^n \to X_K} \leq \gamma\right\}$ and then
\begin{align*}
    \bigcup_{\xi \in \NN_{\gamma}^K}\xi + \frac{\gamma}{2} U \subset \frac{3}{2}\left\{T:  \norm{T: \ell_1^n \to X_K} \leq \gamma\right\}.
\end{align*}
Computing the volume on both sides, we get the following bound for $\#\NN_{\varepsilon}^K$,
\begin{align}
    \#\NN_{\gamma}^K \left(\frac{\gamma}{2}\right)^{n^2}| U| \leq \left(\frac{3}{2}\right)^{n^2}\left| \left\{T:  \norm{T: \ell_1^n \to X_K} \leq \gamma\right\}\right| \nonumber\\\label{cota red fraccion}
    \#\NN_{\gamma}^K \leq \left(\frac{3}{\gamma}\right)^{n^2}\frac{\left| \left\{T:  \norm{T: \ell_1^n \to X_K} \leq \gamma\right\}\right|}{|U|}.
\end{align}
Now notice that
\begin{align}
\left\{T \in \mathcal L(\ell_1^n,X_K) :  \norm{T: \ell_1^n \to X_K} \leq \gamma\right\} \nonumber\\  \subset \left\{X \in \R^{n\times n} : X_i \in \gamma\cdot K \;\mbox{for all } i \right\} \nonumber\\
\subset  \underbrace{\left(\gamma K\right) \times \dots \times \left(\gamma K\right)}_{n},
\end{align}
and hence
\begin{align}\label{cota MK}
    \left|\left\{T:  \norm{T: \ell_1^n \to X_K} \leq \gamma \right\}\right| \leq (\gamma)^{n^2}|K|^n.
\end{align}
In order to bound Equation~\eqref{cota red fraccion} we need a lower bound for $|U|$. By passing to spherical coordinates it can be checked that
\begin{align}
\frac{|U|}{|B_2^{n^2}|}  = \int\limits_{S^{n^2-1}}\norm{T}_{\LL(\ell_2^n,X_K)}^{-n^2}d\sigma(T),	
\end{align}
where $\sigma$ is the normalized Haar measure on $S^{n^2-1}$. Now we apply Hölder's inequality to get
\begin{align*}
1 &\leq \left(\int\limits_{S^{n^2-1}}\norm{T}_{\LL(\ell_2^n,X_K)}^2d\sigma(T)\right)^{\nicefrac{1}{2}}\left(\int\limits_{S^{n^2-1}}\norm{T}_{\LL(\ell_2^n,X_K)}^{-2}d\sigma(T)\right)^{\nicefrac{1}{2}}\\
&\leq  \left(\int\limits_{S^{n^2-1}}\norm{T}_{\LL(\ell_2^n,X_K)}^2d\sigma(T)\right)^{\nicefrac{1}{2}}\left(\int\limits_{S^{n^2-1}}\norm{T}_{\LL(\ell_2^n,X_K)}^{-n^2}d\sigma(T)\right)^{\nicefrac{1}{n^2}}.
\end{align*}
Therefore,
\begin{align*}
\frac{|U|}{|B_2^{n^2}|} \geq \left(\int\limits_{S^{n^2-1}}\norm{T}_{\LL(\ell_2^n,X_K)}^2d\sigma(T)\right)^{\nicefrac{-n^2}{2}}.
\end{align*}
By comparing spherical and Gaussian means  and applying Gaussian Chevet's inequality \cite[Equations (12.7),(43.1)]{tomczak1989banach}, we have that
\begin{align*}
\left(\int\limits_{S^{n^2-1}}\norm{T}_{\LL(\ell_2^n,X_K)}^2d\sigma(T)\right)^{\nicefrac{1}{2}} \ll \frac{1}{n} \left(\ell(K) + \norm{id:\ell_2^n \to X_K} \sqrt{n}\right),
\end{align*}
which implies
\begin{align*}
    \left(\ell(K) + \norm{id:\ell_2^n \to X_K} \sqrt{n}\right)^{-n^2}C^{-n^2} \geq |U|.
\end{align*}
Now notice that since $B_2^n \subset \norm{id:\ell_2^n \to X_K} K$ we have that, 
\begin{align*}
\frac{1}{\norm{id:\ell_2^n \to X_K}}K^{\circ} \subset  B_2^n
\end{align*}
 and hence $\omega(K^{\circ}) \leq \norm{id:\ell_2^n \to X_K}$. Recalling that $\ell(K) \sim \sqrt{n} \omega (K^{\circ})$ we get,
 \begin{align}
 \ell(K) \leq \norm{id:\ell_2^n \to X_K} \sqrt{n}.
 \end{align}
Thus, 
\begin{align}\label{cota U}
    \left(\norm{id:\ell_2^n \to X_K} \sqrt{n}\right)^{-n^2}C^{-n^2} \geq |U|.
\end{align}

Using  Equations \eqref{cota MK} and \eqref{cota U} in Equation~\eqref{cota red fraccion} we get the desired bound.
\end{proof}
We also need the following result.
\begin{lemma}[\cite{tomczak1989banach}, Lemma 38.3]\label{lemavol}
Let $K \subset \R^n$ be a convex body, $L^{(m)}$ the random polytope in \eqref{def random politope}, $T \in SL(n,\R)$ and $\alpha > 0$. Then
\begin{align}
\Pro \left\{ \norm{T: X_{L^{(m)}} \to X_K} \leq \alpha \right\} \leq \alpha^{mn}\left(\frac{|K|}{|B_2^n|}\right)^m.
\end{align}
\end{lemma}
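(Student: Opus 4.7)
The plan is to prove this fixed-$T$ estimate by decomposing the operator norm into a maximum over the vertices of $L^{(m)}$, using independence of the random vertices $X_1,\dots,X_m$, and then estimating the single-point hitting probability by a polar-coordinates/Markov trick.

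First I would observe that since $L^{(m)} = \absconv\{X_1,\dots,X_m,e_1,\dots,e_n\}$, the norm $\|T:X_{L^{(m)}}\to X_K\|$ equals the maximum of $\|T v\|_{X_K}$ over the (absolutely convex) generators, namely
\begin{align*}
\|T:X_{L^{(m)}}\to X_K\| = \max\bigl\{\max_{1\leq i\leq m}\|TX_i\|_{X_K},\ \max_{1\leq j\leq n}\|Te_j\|_{X_K}\bigr\}.
\end{align*}
Consequently, the event $\{\|T:X_{L^{(m)}}\to X_K\|\leq \alpha\}$ is contained in $\bigcap_{i=1}^m \{TX_i\in \alpha K\}$, and by the i.i.d.\ assumption on the $X_i$,
\begin{align*}
\Pro\bigl\{\|T:X_{L^{(m)}}\to X_K\|\leq\alpha\bigr\} \leq \Pro\{TX_1\in\alpha K\}^m.
\end{align*}

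Next I would estimate $\Pro\{TX_1\in\alpha K\}$ where $X_1$ is uniform on $S^{n-1}$. Setting $B:=T^{-1}(\alpha K)$, this probability equals $\sigma(\{\theta\in S^{n-1}:\rho_B(\theta)\geq 1\})$, where $\sigma$ is the normalized Haar measure and $\rho_B$ the radial function of $B$. Applying Markov's inequality to the non-negative function $\rho_B^n$ gives
\begin{align*}
\sigma\bigl(\{\theta:\rho_B(\theta)^n\geq 1\}\bigr)\leq \int_{S^{n-1}} \rho_B(\theta)^n\,d\sigma(\theta)=\frac{|B|}{|B_2^n|},
\end{align*}
where the last equality is the standard polar-coordinate identity $|B|=|B_2^n|\int_{S^{n-1}}\rho_B^n\,d\sigma$. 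Because $T\in SL(n,\R)$, $|B|=|T^{-1}(\alpha K)|=\alpha^n|K|$, hence $\Pro\{TX_1\in\alpha K\}\leq \alpha^n|K|/|B_2^n|$.

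Combining the two displays yields
\begin{align*}
\Pro\bigl\{\|T:X_{L^{(m)}}\to X_K\|\leq\alpha\bigr\}\leq\Bigl(\frac{\alpha^n|K|}{|B_2^n|}\Bigr)^m=\alpha^{mn}\Bigl(\frac{|K|}{|B_2^n|}\Bigr)^m,
\end{align*}
which is exactly the claimed bound. There is no real obstacle here: the only delicate point is the polar-coordinate/Markov step, and that is a standard device for converting a volume bound into a spherical-measure bound. The fact that the $\pm e_j$ vertices of $L^{(m)}$ are deterministic only makes the event stronger, so discarding them causes no loss in the probability estimate.
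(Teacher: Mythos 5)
Your proof is correct and is essentially the standard argument for this lemma (which the paper only cites from Tomczak-Jaegermann, Lemma 38.3): reduce the operator norm to a maximum over the random generators, use independence to get the $m$-th power, and bound the single-point probability $\sigma(S^{n-1}\cap T^{-1}(\alpha K))\leq |T^{-1}(\alpha K)|/|B_2^n|=\alpha^n|K|/|B_2^n|$ via the polar-coordinate identity and $\det T=1$. No gaps worth noting beyond the implicit (and intended) assumption that $K$ is centrally symmetric so that $\|\cdot\|_{X_K}$ is a norm and $T^{-1}(\alpha K)$ is star-shaped about the origin.
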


We present the proof of Proposition~\ref{main prop}.

\begin{proof}[Proof of Proposition~\ref{main prop}]
Let $\{X_i\}_{i=1}^m \subset \Sn$ and $L^{(m)}$ be the polytope in \eqref{def random politope} such that there exists $T\in SL(n,\R)$ with $\norm{T: X_{L^{(m)}} \to X_K} \leq  \gamma $.
As $\ell_1^n \subset L^{(m)}$, $T$ lies in the set $\MM^K$ defined in Lemma~\ref{lemarednuevo}. Consider a $\gamma$-net, $\NN_{\gamma}^K$ for $\MM^K$ for the metric $\LL(\ell_2^n,X_K)$ such that
\begin{align}\label{cardinal}
\#\NN_{\gamma}^K \leq C^{n^2}  \left(\norm{id:\ell_2^n \to X_K}\sqrt{n}\vn{K}\right)^{n^2}.
\end{align}
Let $S \in \NN_{\gamma}^K$ such that $\norm{S-T}_{\LL(\ell_2^n,X_K)} \leq \gamma$, then

\begin{align*}
\norm{S: X_{L^{(m)}} \to X_K} &\leq \norm{T:X_{L^{(m)}} \to X_K} + \norm{S-T:X_{L^{(m)}} \to X_K} \\
&\leq \gamma +\norm{S-T: \ell_2^n \to X_K} \\
&\leq 2 \gamma,
\end{align*}
where we have used the fact that $\norm{S-T:X_{L^{(m)}} \to X_K}\leq \norm{S-T: \ell_2^n \to X_K}$ since by construction $L^{(m)}\subset B_2^n$.
Hence,
\begin{align*}
\BB_{\gamma}:=\left\{ \mbox{There exists }  T\in SL(n,\R)  :  \norm{T: X_{L^{(m)}} \to X_K} \leq  \gamma \right\} \\ \subset \bigcup_{S \in \NN_{\gamma}^K}\left\{\norm{S: X_{L^{(m)}} \to X_K} \leq 2\gamma \right\}.
\end{align*}
Take $\gamma_0 := \beta\left(\frac{|B_2^n|}{|K|}\right)^{\frac{1}{n}}$, by the union bound, Equation~\eqref{cardinal} and Lemma~\ref{lemavol}
\begin{align}
\P(\BB_{\gamma_0}) \leq C^{n^2} \left( \norm{id:\ell_2^n \to X_K}\sqrt{n}|K|^{\frac{1}{n}}\right)^{n^2} (2\beta)^{nm},
\end{align}
which concludes the proof.
\end{proof}
As a consequence of Proposition \ref{main prop} we obtain the following result.
\begin{proposition}\label{teo: mejora khr}
Let $K\subset \R^n$ be a centrally symmetric convex body such that
\begin{align*}
    \norm{id:\ell_2^n \to X_K}\sqrt{n}|K|^{\frac{1}{n}} \sim 1.
\end{align*}
Given $\delta \ge 1$, with probability greater than or equal to $1-e^{-n^2}$ the random polytope $L^{(\lceil \delta n \rceil)}$ in \eqref{def random politope} verifies
$$\sqrt{n} \ll \vr(K,L^{(\lceil \delta n \rceil)}).$$
In particular, $\sqrt{n} \ll \lvr(K)$.
\end{proposition}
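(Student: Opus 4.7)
The plan is to combine the lower bound on $\inf_{T \in SL(n,\R)} \norm{T: X_{L^{(m)}} \to X_K}$ from Proposition~\ref{main prop} with a deterministic upper bound on $|L^{(m)}|^{1/n}$, through the identity from Remark~\ref{propiedades elementales}(1):
\begin{align*}
\vr(K,L^{(m)}) = \left(\frac{|K|}{|L^{(m)}|}\right)^{1/n} \inf_{T \in SL(n,\R)} \norm{T: X_{L^{(m)}} \to X_K}.
\end{align*}

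Fix $m=\lceil \delta n\rceil$. The hypothesis gives $\norm{id:\ell_2^n \to X_K}\sqrt{n}|K|^{1/n} \le C_0$ for an absolute constant $C_0$, so Proposition~\ref{main prop} yields, with $C$ the absolute constant there,
\begin{align*}
\mathbb{P}\bigl\{\exists\,T\in SL(n,\R):\norm{T:X_{L^{(m)}}\to X_K}\le\beta(|B_2^n|/|K|)^{1/n}\bigr\}\le [CC_0(2\beta)^{m/n}]^{n^2}.
\end{align*}
Since $m/n\ge\delta\ge 1$, one can pick $\beta=\beta(\delta)>0$ a small absolute constant so that $CC_0(2\beta)^{\delta}\le e^{-1}$, forcing the probability above to be at most $e^{-n^2}$. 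Consequently, with probability at least $1-e^{-n^2}$ one has $\inf_T\norm{T}\ge\beta(|B_2^n|/|K|)^{1/n}$.

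For the volume factor I would sidestep the probabilistic estimate \eqref{volumen politopo} and use a deterministic bound instead: $L^{(m)}$ is the absolute convex hull of $N=m+n\le(\delta+1)n$ unit vectors, so by Carathéodory's theorem for absolutely convex hulls in $\R^n$ combined with Hadamard's inequality on each $n$-subset,
\begin{align*}
|L^{(m)}|\le\binom{N}{n}|B_1^n|\le(e(\delta+1))^{n}|B_1^n|,
\end{align*}
which gives $|L^{(m)}|^{1/n}\le C_\delta/n$ uniformly in the random draw (using $|B_1^n|^{1/n}\sim 1/n$). Plugging this into the volume-ratio formula together with $|B_2^n|^{1/n}\sim 1/\sqrt{n}$,
\begin{align*}
\vr(K,L^{(m)})\ge\beta\,\frac{|B_2^n|^{1/n}}{|L^{(m)}|^{1/n}}\ \gtrsim\ \frac{\beta}{C_\delta}\sqrt{n}\ \gg\ \sqrt{n},
\end{align*}
on the same event of probability $\ge 1-e^{-n^2}$. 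The conclusion $\sqrt{n}\ll\lvr(K)$ then follows immediately, since the largest volume ratio is at least as big as $\vr(K,L^{(m)})$ for this particular $L^{(m)}$.

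The main (and genuinely minor) obstacle is calibrating $\beta$ as a function of $\delta$ so that the $C^{n^2}$ net cost in Proposition~\ref{main prop} is absorbed and the overall failure probability drops to $e^{-n^2}$; this crucially uses that $m/n\ge\delta$ is a true constant independent of $n$, which is exactly why the exponent $nm=n\cdot\lceil\delta n\rceil$ in the bound of Proposition~\ref{main prop} scales as $n^2$ and can overpower the $C^{n^2}$ factor. The rest of the argument is routine bookkeeping.
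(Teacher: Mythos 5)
Your proof is correct and follows essentially the paper's own argument: apply Proposition~\ref{main prop} with $m=\lceil\delta n\rceil$, absorb the factor $\left(\norm{id:\ell_2^n\to X_K}\sqrt{n}|K|^{1/n}\right)^{n^2}$ into $C^{n^2}$ using the hypothesis, calibrate $\beta$ so the failure probability is at most $e^{-n^2}$, and feed the resulting lower bound on $\inf_{T\in SL(n,\R)}\norm{T:X_{L^{(m)}}\to X_K}$ into Remark~\ref{propiedades elementales}(1) together with a bound $|L^{(m)}|^{1/n}\ll_{\delta} 1/n$. The only (harmless) deviation is that you prove the volume bound elementarily via the Carath\'eodory decomposition of the absolute convex hull and Hadamard's inequality instead of citing the B\'ar\'any--F\"uredi estimate \eqref{volumen politopo}; for $m\sim\delta n$ the two are equivalent up to a constant depending on $\delta$.
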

\begin{proof}
By Proposition~\ref{main prop} we know that there is an absolute constant $C>0$ such that, for every $\beta>0$,
\begin{align*}
\mathbb P & \left\{ \mbox{There exists }  T\in SL(n,\R)  :  \norm{T: X_{L^{(m)}} \to X_K} \leq  \beta \left(\frac{\vert B_2^n \vert}{\vert K \vert} \right)^{1/n}  \right\}
\\ &\leq C^{n^2}(2\beta)^{nm}.
\end{align*}
If $m=\lceil \delta n \rceil$ and $\beta  \leq  \frac{1}{2}(Ce)^{-\frac{1}{\delta}}$, then with probability at least $1-e^{-n^2}$ the random polytope verifies
\begin{align} \label{norma op}
    \norm{T: X_{L^{(\lceil \delta n \rceil)}} \to X_K} \geq  \beta \left(\frac{\vert B_2^n \vert}{\vert K \vert} \right)^{1/n} \sim \frac{1}{\sqrt{n} \vn{K}},
\end{align}
for every $T \in SL(n,\R)$.

Hence, by Equations \eqref{volumen politopo} and \eqref{norma op} and Remark \ref{propiedades elementales} (1) we have
\begin{align*}
  \sqrt{n} \ll \vr(K,L^{(\lceil \delta n \rceil)}),
\end{align*}
which concludes the proof.
\end{proof}
In order to prove Theorem \ref{main prop} we will show that any given convex body can be approximated by another one which fulfils the hypothesis of the previous proposition. 
 To do this we use Klartag’s solution to the isomorphic slicing problem (which asserts that, given any convex body, we can find another convex body, with absolutely bounded
isotropic constant, that is geometrically close to the first one).

\begin{theorem}[\cite{klartag2006convex}, Theorem 1.1 ]\label{teo aprox L acotado}
Let $K \subset \R^n$ be a convex body and let $\varepsilon > 0$. Then there is a convex body $T \subset \R^n$ such that
\begin{enumerate}
 \item $d(K,T) < 1 + \varepsilon$,
 \item $L_T < \frac{c}{\sqrt{\varepsilon}}$.
\end{enumerate}
Here $c>0$ is an absolute constant and $$d(K,T)=\inf \{ab : a,b >0, \exists \; x, y \in \R^n, \frac{1}{a}( K + x) \subset T + y \subset b (K +x)\}.$$
\end{theorem}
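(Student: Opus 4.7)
The statement is Klartag's isomorphic slicing theorem; the paper invokes it as a black box, so the plan below outlines the strategy of Klartag's original argument rather than an independent route. The overall strategy is to construct $T$ as a small convex perturbation of (the log-concave extension of) $K$ whose density has controlled decay, forcing the isotropic constant to be absolutely bounded in terms of the size of the perturbation.

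First, I would reduce to the case where $K$ is already isotropic: since both the Banach-Mazur distance $d(\cdot,\cdot)$ and the isotropic constant $L_{(\cdot)}$ are affine invariants, no generality is lost. Next, introduce the logarithmic Laplace transform $\Lambda(\xi) = \log \int_K e^{\langle x, \xi\rangle}\,dx$ and, for a parameter $t > 0$, consider the family of log-concave densities $f_t(x) = c_t \cdot \mathbf{1}_K(x) e^{-t \Lambda^{\ast}(x)}$ (equivalently, work with the $L_p$-centroid bodies $Z_p(K)$ with $p = p(\varepsilon)$). The level sets of $f_t$ give a one-parameter family $K_t$ of convex bodies interpolating between $K$ itself (as $t \to 0$) and highly concentrated objects with small isotropic constant (as $t \to \infty$).

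The core of the argument is to establish two competing estimates. On the one hand, $d(K, K_t) \leq 1 + O(t)$, showing geometric closeness; on the other hand, $L_{K_t} \leq c/\sqrt{t}$. Optimizing with $t \sim \varepsilon$ gives the stated bounds. The isotropic-constant estimate follows from a direct computation: tilting the uniform measure on $K$ by $e^{-t\Lambda^{\ast}}$ damps the second moments by a factor proportional to $1/\sqrt{t}$, which translates into the claimed decay of $L_{K_t}$.

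The main obstacle, and the genuinely new ingredient in Klartag's paper, is the quantitative distance bound. A naive perturbation of the density can distort $K$ significantly in Banach-Mazur distance, so the argument hinges on a delicate concentration argument: using Borell-type reverse Hölder inequalities and sharp deviation estimates for $\|x\|_2$ under log-concave measures, one shows that the tilted measure $f_t\,dx$ concentrates most of its mass near $\partial K$, ensuring that the level set $K_t$ sits between $K$ and $(1 + O(t)) K$ up to affine shifts. Once this is in place, the proof is completed by choosing $t = t(\varepsilon)$ and declaring $T := K_t$.
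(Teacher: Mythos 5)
This statement is not proved in the paper at all: it is quoted verbatim from Klartag's solution of the isomorphic slicing problem (\cite{klartag2006convex}, Theorem~1.1) and used as a black box, so there is no internal proof to compare your proposal against. Your sketch does correctly identify the broad architecture of Klartag's argument --- reduce by affine invariance, tilt the uniform measure on $K$ through the logarithmic Laplace transform, and play a distance estimate $d(K,T)\le 1+O(t)$ against an isotropic-constant estimate $L_T\le c/\sqrt{t}$ before setting $t\sim\varepsilon$.

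However, as a proof the proposal has a genuine gap at its central step. The claim that ``tilting the uniform measure on $K$ damps the second moments by a factor proportional to $1/\sqrt{t}$'' is not a correct mechanism: the isotropic constant is governed by $\det(\mathrm{Cov})^{1/2n}$, and a \emph{generic} tilt of moderate strength does not decrease it --- if it did, every body would already have bounded isotropic constant. The decisive idea in Klartag's proof, which your sketch omits entirely, is the \emph{selection} of the tilt: one restricts to a level set such as $\{\xi:\Lambda_K(\xi)\le \varepsilon n\}$, chooses $\xi_0$ maximizing $\det\nabla^2\Lambda_K(\xi)$ there, and proves a lower bound for this maximum by integrating $\det\nabla^2\Lambda_K$ over the level set and observing that the gradient map $\nabla\Lambda_K$ covers a large portion of $K$ (a change-of-variables/pigeonhole argument). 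Since $\nabla^2\Lambda_K(\xi_0)$ is exactly the covariance of the tilted measure, this yields the bound on the isotropic constant; the passage back to a convex body $T$ with $d(K,T)\le 1+\varepsilon$ then uses Ball's bodies associated to the tilted log-concave density together with the constraint $\Lambda_K(\xi_0)\le\varepsilon n$. Without the maximization step and its integral lower bound, both of the ``two competing estimates'' in your outline remain unproved assertions, so the proposal is a plan of attack rather than a proof.
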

\begin{remark}\label{remark aproximacion acotada}
Given a convex body $K \subset \R^n$ there is a convex body $T \subset \R^n$ such that $\vr(T,K) \sim \vr (K,T) \sim 1$ and $L_T \leq c$, where $c>0$ is an absolute constant.
\end{remark}

Indeed, given $K$, by Theorem  \ref{teo aprox L acotado} (using $\varepsilon = 1$)  there is $T \subset \R^n$ with $L_T \leq c$ and $d(K,T) \leq 2$. Notice that if for certain $x,y \in \R^n$ and $a,b >0$ we have that $\frac{1}{a}( K + x) \subset T + y \subset b (K +x)$. Then,
\begin{align*}
\vr(T,K) \leq \frac{\vn{T}}{\frac{1}{a}\vn{K}} \leq ab \frac{\vn{K}}{\vn{K}} \leq ab.
\end{align*}
Hence $vr(T,K) \leq d(T,K)$, and by symmetry, the same holds for $\vr(K,T)$.

\begin{proposition}\label{propo aprox}
For every convex body $K\subset \R^n$ there is a convex body $W$ with $\vr(W,K) \sim 1$ such that
\begin{align}
    \norm{id:\ell_2^n \to X_W}\sqrt{n}|W|^{\frac{1}{n}} \sim 1.
\end{align}
\end{proposition}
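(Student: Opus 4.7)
The argument proceeds in two stages: first, approximate $K$ by a body with bounded isotropic constant via Klartag's isomorphic slicing theorem; second, use this to obtain the required normalization.

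\emph{Stage 1 (Klartag's approximation).} Apply Remark~\ref{remark aproximacion acotada} to $K$ to produce a convex body $T\subset\R^n$ with $\vr(T,K)\sim\vr(K,T)\sim 1$ (so $T$ is Banach--Mazur close to $K$) and with bounded isotropic constant $L_T\le c$. Place $T$ in its isotropic position, so that $|T|^{1/n}=1$. Hensley's lower bound on central sections of isotropic log-concave measures then gives $T\supset c'L_T B_2^n$, and using $L_T\sim 1$ we obtain $\norm{id:\ell_2^n\to X_T}\le 1/(c'L_T)\ll 1$.

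\emph{Stage 2 (Construction of $W$).} Take $W$ to be an appropriate (affine, possibly slightly modified) image of $T$. By the affine invariance of the volume ratio (Remark~\ref{propiedades elementales}(5)), the property $\vr(W,K)\sim 1$ is preserved. The lower-bound part of the desired normalization is automatic: from $\norm{id:\ell_2^n\to X_W}^{-1}B_2^n\subset W$ one deduces $|W|^{1/n}\ge|B_2^n|^{1/n}/\norm{id:\ell_2^n\to X_W}$, and hence $\sqrt{n}|W|^{1/n}\norm{id:\ell_2^n\to X_W}\ge\sqrt{n}|B_2^n|^{1/n}\sim 1$ regardless of $W$.

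\emph{Main obstacle.} The real content lies in matching the upper bound $\sqrt{n}|W|^{1/n}\norm{id:\ell_2^n\to X_W}\ll 1$. Minimized over affine images of $W$, this quantity reduces (up to absolute constants) to the classical volume ratio $\vr(W)=\vr(W,B_2^n)$, which is itself affine invariant; so the upper bound is equivalent to requiring $\vr(W)\ll 1$. Bounded isotropic constant of $T$ does not yield this on its own: the cube has $L\sim 1$ but $\vr\sim\sqrt{n}$. Thus one must genuinely modify $T$. I would expect the construction to exploit the fact that $T$ contains a Euclidean ball of radius $\sim L_T$ (Stage 1) in order to produce $W$ with bounded classical volume ratio while keeping $\vr(W,K)\sim 1$---for example by intersecting $T$ with a carefully chosen dilate of $B_2^n$ tuned to the inscribed ball, or by invoking an $M$-position/$M$-ellipsoid-type argument to find a body simultaneously close to $K$ in the $\vr$-sense and of bounded standard volume ratio. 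This balancing of the two conditions is the delicate step of the proof.
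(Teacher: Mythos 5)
Your Stage 1 is exactly the paper's first step (Remark~\ref{remark aproximacion acotada}, i.e.\ Klartag's theorem), and your diagnosis of the obstacle is accurate: bounded isotropic constant alone does not give the required normalization, and what is really needed is a body close to $K$ whose volume ratio with respect to a \emph{centered inscribed Euclidean ball} is bounded. But the proposal stops precisely at the step that carries the content of the proposition: no body $W$ is actually constructed, and the one concrete construction you float --- intersecting $T$ with a dilate of $B_2^n$ ``tuned to the inscribed ball'' --- cannot work as stated. Intersecting a body with a ball only decreases its volume and its circumradius; it can never increase the inradius relative to the volume, which is what the upper bound $\norm{id:\ell_2^n\to X_W}\sqrt{n}|W|^{1/n}\ll 1$ demands. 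For the volume-one isotropic cube, $T\cap RB_2^n$ still has inradius $\sim 1$ for every $R$, so the product stays of order $\sqrt{n}$.

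The missing idea is duality. The paper applies Klartag (plus Rogers--Shephard) to arrange that $K^{\circ}$ is centrally symmetric, isotropic, with $L_{K^{\circ}}\ll 1$, and then truncates \emph{the polar}: setting $W^{\circ}:=K^{\circ}\cap c\sqrt{n}B_2^n$, Markov's inequality for the isotropic measure gives $|W^{\circ}|\ge \frac12$, hence $\vr(W,K)\sim\vr(K^{\circ},W^{\circ})\sim 1$; the inclusion $W^{\circ}\subset c\sqrt{n}B_2^n$ dualizes to $\frac{1}{c\sqrt{n}}B_2^n\subset W$, i.e.\ $\norm{id:\ell_2^n\to X_W}\ll\sqrt{n}$; and Bourgain--Milman/Santal\'o (Equation~\eqref{santalo}) turns $|W^{\circ}|^{1/n}\sim 1$ into $|W|^{1/n}\sim\frac1n$, so the product is $\sim 1$. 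Your second, vaguer suggestion (an $M$-position argument) is indeed the paper's alternative proof, but there the operation is a Minkowski sum $W=K+\big(\tfrac{|K|}{|B_2^n|}\big)^{1/n}B_2^n$ --- which does enlarge the inradius --- not an intersection; in either case the decisive step is absent from your write-up.
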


\begin{proof}[Proof of Proposition \ref{propo aprox}]
By Remark \ref{remark aproximacion acotada} and the Roger-Shephard inequality (replacing the body if necessary) we can assume that $K^{\circ}$ is a centrally symmetric isotropic convex body and  $L_{K^{\circ}}$ is uniformly bounded.

By Markov's inequality we have that
\begin{align*}
    \P\{\norm{x}_2 \geq c\sqrt{n} \} \leq \frac{1}{2}
\end{align*}
for some absolute constant $c>0$.

Consider $W$ such that $W^{\circ} = K^{\circ}\cap  c\sqrt{n}B_2^n$, then  $\vn{W^{\circ}} \geq \frac{1}{2}$ and hence $\vr(W,K) \sim \vr(K^{\circ},W^{\circ}) \sim 1$.

Since $W^{\circ} \subset c \sqrt{n}B_2^n$ we have that  $$\norm{id:\ell_2^n \to X_W}=\norm{id: X_{W^{\circ}} \to \ell_2^n} \ll  \sqrt{n}.$$
Finally, as $\vn{W^{\circ}} \sim 1$, we have that $\vn{W} \sim \frac{1}{n}$ (applying the Blaschke-Santaló / Bourgain-Milman inequality,  Equation~\eqref{santalo}). Therefore 
\begin{align*}
     \norm{id:\ell_2^n \to X_W}\sqrt{n}|W|^{\frac{1}{n}} \sim 1,
    \end{align*}
    which concludes the proof.
\end{proof}

As pointed out by A. Giannopoulos and  A. Litvak \cite{GL}, an alternative proof of the last proposition  can be obtained by using the $M$-position, which seems a more natural approach.

The $M$-position was discovered by Milman in relation with the reverse Brunn-Minkowski inequality. We refer the reader to \cite[Chapter 8]{artstein2015asymptotic} for a very nice treatment on this topic. 

A convex body $K$ is in $M$-position with constant $C$ if letting $rB_2^n$ have the same volume as $K$, i.e. $r=\vrn{K}{B_2^n}$, we have
\begin{align}\label{mposicion}
    \frac{1}{C} \vn{rB_2^n + T} \leq \vn{K + T} \leq C\vn{rB_2^n + T},\\
    \nonumber \frac{1}{C} \vn{r^{-1}B_2^n + T} \leq \vn{K^\circ + T} \leq C\vn{r^{-1}B_2^n + T},
\end{align}
for every convex body $T \subset \R^n$.
Milman proved that there is an absolute constant $C>0$ such that for every centrally symmetric convex body $K$ there is an transformation $A \in SL(n,\R)$ such that $AK$ is in $M$-position with constant $C$.

\begin{proof}[Alternative proof of Proposition \ref{propo aprox}]
We can suppose that $K$ is a centrally symmetric body in $M$-position. By Equation \eqref{mposicion} the convex body 
$$W := K + \vrn{K}{B_2^n}B_2^n$$ satisfies that $\vn{W} \sim \vn {K}$ and therefore $\vr(W,K) \sim 1$.
On the other hand, since $\vrn{K}{B_2^n}B_2^n \subset W$, we have that $\norm{id: B_2^n \to W} \leq \vrn{B_2^n}{K}$.

Hence, \begin{align*}
    \norm{id:\ell_2^n \to X_W}\sqrt{n}|W|^{\frac{1}{n}} \sim 1,
\end{align*}
as wanted.
\end{proof}

The following theorem contains, as a consequence, Theorem \ref{teo principal}. 
\begin{theorem}\label{teo principal proba}
Let $K\subset \R^n$ be  convex body.
Given $\delta \ge 1$, with probability greater than or equal to $1-e^{-n^2}$ the random polytope $L^{(\lceil \delta n \rceil)}$ in \eqref{def random politope} verifies
$$\sqrt{n} \ll \vr(K,L^{(\lceil \delta n \rceil)}).$$
In particular, $\sqrt{n} \ll \lvr(K)$.
\end{theorem}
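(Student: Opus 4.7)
The plan is to assemble the two preparatory results, Proposition~\ref{propo aprox} and Proposition~\ref{teo: mejora khr}, by chaining them through the submultiplicativity of the volume ratio recorded in Remark~\ref{propiedades elementales}~(4). All the hard analytic work (the net count in Lemma~\ref{lemarednuevo}, the Gaussian Chevet estimate giving the lower bound on $|U|$, and the reduction to a body with $\|\mathrm{id}:\ell_2^n\to X_K\|\sqrt{n}|K|^{1/n}\sim 1$ via Klartag's isomorphic slicing or the $M$-position) has already been done upstream, so the remaining task is a clean three-line argument.

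First, given an arbitrary convex body $K\subset\R^n$, I would apply Proposition~\ref{propo aprox} to produce a centrally symmetric convex body $W\subset\R^n$ with
\begin{align*}
\vr(W,K)\sim 1 \qquad\text{and}\qquad \norm{id:\ell_2^n\to X_W}\sqrt{n}\,|W|^{1/n}\sim 1.
\end{align*}
Because $W$ meets precisely the hypothesis of Proposition~\ref{teo: mejora khr}, I can then feed $W$ into that proposition: for any fixed $\delta\ge 1$, on an event $\Omega_\delta$ of probability at least $1-e^{-n^2}$, the random polytope $L^{(\lceil \delta n\rceil)}$ defined in \eqref{def random politope} satisfies $\sqrt{n}\ll \vr(W,L^{(\lceil \delta n\rceil)})$.

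Finally, on the same event $\Omega_\delta$ I would chain the three bodies $W$, $K$, $L^{(\lceil \delta n\rceil)}$ using Remark~\ref{propiedades elementales}~(4) to write
\begin{align*}
\sqrt{n}\;\ll\;\vr\bigl(W,L^{(\lceil \delta n\rceil)}\bigr)\;\le\;\vr(W,K)\cdot\vr\bigl(K,L^{(\lceil \delta n\rceil)}\bigr)\;\sim\;\vr\bigl(K,L^{(\lceil \delta n\rceil)}\bigr),
\end{align*}
which rearranges to the desired inequality $\sqrt{n}\ll \vr(K,L^{(\lceil \delta n\rceil)})$. Taking the supremum over all $L$ yields $\sqrt{n}\ll \lvr(K)$.

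There is essentially no obstacle in this final step beyond bookkeeping: the random polytope and the underlying probability space are identical in the applications to $W$ and to $K$, so the high-probability event transfers without modification, and the implicit absolute constant produced by chaining $\vr(W,K)\sim 1$ through the submultiplicative inequality is independent of $n$ and of $K$. The only subtle point worth flagging is that Proposition~\ref{teo: mejora khr} is formulated for centrally symmetric bodies; but this is harmless because Proposition~\ref{propo aprox} delivers $W$ centrally symmetric (the construction goes through the polar of an isotropic centered body, or, in the alternative $M$-position proof, through a Minkowski sum with a Euclidean ball), and the chain above only requires symmetry of the intermediate body $W$, not of $K$ itself.
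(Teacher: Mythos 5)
Your proposal is correct and coincides with the paper's own proof: both apply Proposition~\ref{propo aprox} to obtain a body $W$ with $\vr(W,K)\sim 1$ satisfying the hypothesis of Proposition~\ref{teo: mejora khr}, and then transfer the high-probability lower bound from $\vr(W,L^{(\lceil \delta n\rceil)})$ to $\vr(K,L^{(\lceil \delta n\rceil)})$ via the submultiplicativity in Remark~\ref{propiedades elementales}. Your remark about the centrally symmetric intermediate body is a correct reading of a point the paper leaves implicit.
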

\begin{proof}
By Proposition \ref{propo aprox} there is $W$ with $\vr(W,K) \sim 1$ such that
\begin{align}
     \norm{id:\ell_2^n \to X_W}\sqrt{n}|W|^{\frac{1}{n}} \sim 1.
\end{align}
Applying Proposition \ref{teo: mejora khr}, given $\delta \ge 1$, with probability greater than or equal to $1-e^{-n^2}$ the random polytope $L^{(\lceil \delta n \rceil)}$ in \eqref{def random politope} verifies
$$\sqrt{n} \ll \vr(W,L^{(\lceil \delta n \rceil)}).$$
Then,
\begin{align*}
\sqrt{n} \ll \vr(W,L^{(\lceil \delta n \rceil)}) \leq \vr(W,K)\vr(K,L^{(\lceil \delta n \rceil)}) \sim \vr(K,L^{(\lceil \delta n \rceil)}),
\end{align*}
as wanted.
\end{proof}

\begin{corollary}
Let $K\subset \R^n$ be a convex body.
Given $\delta \ge 1$, there is polytope $Z^{(\lceil \delta n \rceil)}$ with $2(\lceil \delta n \rceil +n)$ facets such that
$$\sqrt{n} \ll \vr(Z^{(\lceil \delta n \rceil)},K).$$
\end{corollary}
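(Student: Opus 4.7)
The plan is to invoke Theorem~\ref{teo principal proba} on the polar body $K^{\circ}$ and then dualize: a random polytope with few vertices turns into a polytope with few facets under polarity, which is exactly the kind of body the corollary asks for.

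More precisely, after translating so that the origin lies in the interior of $K$ (using the affine invariance of the volume ratio, Remark~\ref{propiedades elementales}(5)), I would apply Theorem~\ref{teo principal proba} to the convex body $K^{\circ}$. This gives, with probability at least $1-e^{-n^2}$, a random polytope $L := L^{(\lceil \delta n \rceil)}$ as in \eqref{def random politope} satisfying
\[
\sqrt{n} \ll \vr(K^{\circ},L).
\]
Now set $Z^{(\lceil \delta n \rceil)} := L^{\circ}$. Since $L$ is the absolute convex hull of the $\lceil \delta n \rceil + n$ vectors $X_1,\dots,X_{\lceil \delta n \rceil},e_1,\dots,e_n$, it is equivalently the convex hull of the $2(\lceil \delta n \rceil + n)$ points $\pm X_i, \pm e_j$. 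Its polar is therefore the intersection of $2(\lceil \delta n \rceil + n)$ half-spaces $\{x: |\langle v,x\rangle|\le 1\}$, hence a polytope with at most $2(\lceil \delta n \rceil + n)$ facets, as required.

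To finish, I would apply the polar duality relation of Remark~\ref{propiedades elementales}(2): since $L$ is centrally symmetric, one gets
\[
\vr(Z^{(\lceil \delta n \rceil)},K) \;=\; \vr(L^{\circ},K) \;\sim\; \vr(K^{\circ},L) \;\gg\; \sqrt{n}.
\]

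The main technical obstacle is that Remark~\ref{propiedades elementales}(2) is stated for pairs of centrally symmetric bodies, whereas $K$ may not be symmetric. To handle this, I would reduce to the symmetric case: translate so that $K$ has its centroid at the origin and consider $K \cap (-K)$, which is symmetric and satisfies $|K\cap(-K)| \ge 2^{-n}|K|$ by Rogers--Shephard. Then $\vr(K,K\cap(-K)) \le 2$ (using $T=\mathrm{id}$), and by the transitivity in Remark~\ref{propiedades elementales}(4),
\[
\vr(Z^{(\lceil \delta n \rceil)},K) \;\ge\; \frac{\vr(Z^{(\lceil \delta n \rceil)},K\cap(-K))}{\vr(K,K\cap(-K))} \;\ge\; \frac{1}{2}\,\vr(Z^{(\lceil \delta n \rceil)},K\cap(-K)),
\]
so the symmetric case (applied with $K$ replaced by $K\cap(-K)$) implies the general one, with the factor $\tfrac{1}{2}$ absorbed into the $\ll$ constant.
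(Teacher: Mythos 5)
Your argument is correct and is essentially the paper's proof: apply Theorem~\ref{teo principal proba} to the polar body, dualize via Remark~\ref{propiedades elementales}(2), and observe that $(L^{(\lceil \delta n \rceil)})^{\circ}$ has $2(\lceil \delta n \rceil +n)$ facets. Your additional reduction to the symmetric case via $K\cap(-K)$ and Rogers--Shephard correctly fills in a detail (Remark~\ref{propiedades elementales}(2) is stated only for centrally symmetric pairs) that the paper's one-line proof leaves implicit.
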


\begin{proof}
The result follows from the previous theorem, Remark \ref{propiedades elementales} (2) and the fact that the polar of the polytope $L^{(\lceil \delta n \rceil)}$ has $2(\lceil \delta n \rceil +n)$ facets.
\end{proof}
\section{Upper bounds} 
We now provide upper estimates for $\lvr(K)$ for different classes of convex bodies. Together with this inequalities we derive sharp asymptotic estimates. 
It should be mentioned that, to bound $\vr(K,L)$,  Giannopoulos and Hartzoulaki  \cite{giannopoulos2001john}  managed to find randomly a unitary operator $T:X_K \to X_L$ with small norm. To do this, they used Chevet's inequality for an adequate position of $L$.
To our purposes we will use the following high probability version of the Gaussian Chevet's inequality (tail inequality).
\begin{proposition}\label{highprobChevet}
Let $A = \left(g_{ij}\right)_{1\leq i,j \leq n} \in \R^{n\times n}$ be a random matrix with independent gaussian entries $g_{ij}\sim \NN(0,1)$ and $K,L \subset \R^n$ two convex bodies. Then, for all $u \geq 0$, with probability greater than $1 - e^{-u^2}$ we have
\begin{align}
\norm{A:X_L \to X_K} \ll &\ell(K) \norm{id: \ell_2^n \to X_{L^{\circ}}} + \ell(L^{\circ}) \norm{id:\ell_2^n\to X_{K}} \\ \nonumber &+ u\norm{id: \ell_2^n \to X_{L^{\circ}}} \cdot  \norm{id: \ell_2^n \to X_{K}}
\end{align}
\end{proposition}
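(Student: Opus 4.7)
The plan is to combine the classical Chevet inequality for the expected operator norm with Gaussian concentration for Lipschitz functions on $(\R^{n^2}, \gamma_{n^2})$. This is the standard route for upgrading expectation bounds for Gaussian suprema to tail inequalities.

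First I would rewrite the quantity of interest as a Gaussian supremum. Assuming $K$ and $L$ are centrally symmetric (the general case follows by passing to $L-L$ and the analogous symmetrization, since the extra factors are absolute constants by Rogers--Shephard, cf.\ \eqref{bastasimetricos}),
\begin{align*}
\norm{A:X_L\to X_K} \;=\; \sup_{x\in L,\; y\in K^{\circ}} \langle Ax,y\rangle.
\end{align*}
Viewed as a function $F(A)$ on $\R^{n\times n}$ equipped with the Hilbert--Schmidt (i.e.\ Euclidean) norm, this is $1$-Lipschitz up to the ``diameter'' of the index set: for any two matrices $A,B$,
\begin{align*}
|F(A)-F(B)| \;\leq\; \sup_{x\in L,\; y\in K^{\circ}} \bigl|\langle (A-B)x,y\rangle\bigr| \;\leq\; \norm{A-B}_{HS}\cdot\Bigl(\sup_{x\in L}\norm{x}_2\Bigr)\Bigl(\sup_{y\in K^{\circ}}\norm{y}_2\Bigr).
\end{align*}
The two suprema are easily identified by duality: $\sup_{x\in L}\norm{x}_2 = \norm{id:\ell_2^n\to X_{L^{\circ}}}$ and $\sup_{y\in K^{\circ}}\norm{y}_2 = \norm{id:\ell_2^n\to X_K}$. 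Denote the product of these two quantities by $\sigma$.

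Next I would apply the Gaussian concentration inequality for Lipschitz functions: for $F$ that is $\sigma$-Lipschitz on $\R^{n^2}$ with respect to the Euclidean norm,
\begin{align*}
\P\bigl(F(A)\;\geq\; \E F(A) + t\bigr) \;\leq\; e^{-t^2/(2\sigma^2)} \qquad\text{for every }t\geq 0.
\end{align*}
Choosing $t = \sqrt{2}\,u\sigma$ gives the tail $e^{-u^2}$ that appears in the statement, at the price of an absolute constant hidden inside $\ll$.

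Finally I would control $\E F(A) = \E\norm{A:X_L\to X_K}$ by the classical Gaussian Chevet inequality (e.g.\ \cite[Equations (12.7),(43.1)]{tomczak1989banach}), which in the present notation reads
\begin{align*}
\E\norm{A:X_L\to X_K} \;\ll\; \ell(K)\,\norm{id:\ell_2^n\to X_{L^{\circ}}} + \ell(L^{\circ})\,\norm{id:\ell_2^n\to X_K},
\end{align*}
using the identifications $w_G(K^{\circ}) \sim \ell(K)/\sqrt{n}$ combined with $\ell(K) \sim \sqrt{n}\,w(K^{\circ})$ recalled in the preliminaries (so that the two Chevet terms precisely match the first two terms of the desired inequality). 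Adding the concentration deviation $u\sigma$ produces the third term, and the statement follows.

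There is no substantial obstacle here: both Chevet's inequality and Gaussian Lipschitz concentration are invoked as black boxes. The only point that requires a (routine) verification is the identification of the Lipschitz constant $\sigma$ with the product of the two ``Euclidean radii'' that multiply $u$ in the statement, and the bookkeeping needed to replace the $e^{-t^2/2\sigma^2}$ produced by concentration with the cleaner $e^{-u^2}$ appearing in the proposition by absorbing $\sqrt{2}$ into the $\ll$ constant.
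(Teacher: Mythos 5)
Your proposal is correct, but it reaches the tail bound by a different route than the paper. You take the expectation bound from the classical Gaussian Chevet inequality as a black box and then upgrade it to a deviation inequality via Gaussian concentration for Lipschitz functions (Borell--TIS), after verifying that $A\mapsto\norm{A:X_L\to X_K}$ is Lipschitz with respect to the Hilbert--Schmidt norm with constant $\sigma=\norm{id:\ell_2^n\to X_{L^{\circ}}}\cdot\norm{id:\ell_2^n\to X_K}$; your duality identifications of the two Euclidean radii and the computation of $\sigma$ are exactly right, and this $\sigma$ is precisely the quantity multiplying $u$ in the statement. The paper instead works entirely inside the chaining framework: it equips $L\times K^{\circ}$ with the metric $d((x,y^*),(\tilde x,\tilde y^*))=\norm{x-\tilde x}_2\norm{id:\ell_2^n\to X_K}+\norm{y^*-\tilde y^*}_2\norm{id:\ell_2^n\to X_{L^{\circ}}}$, checks that the process $X_{(x,y^*)}=\langle Ax,y^*\rangle$ is subgaussian for $d$, introduces the comparison Gaussian process $Y_{(x,y^*)}$ whose increments realize $d$, and then combines the generic chaining tail bound with Talagrand's majorizing measure theorem to get $\sup X\ll\E\sup Y+u\,\diam$ in one stroke, identifying $\E\sup Y$ with the two Chevet terms and the diameter with $\sigma$. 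Your argument is shorter and more elementary, since it avoids majorizing measures entirely and only needs two standard black boxes; the paper's version derives the expectation and deviation parts simultaneously from the same chaining machinery (it is essentially Vershynin's Exercise 8.7.3 carried out). Both yield the identical estimate, so there is no gap; the only cosmetic caveat is that the proposition is implicitly stated for centrally symmetric bodies (the notation $X_K$ is only defined in that case), so your opening symmetrization remark, while harmless, is not really needed.
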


Although the previous proposition is probably known for specialist we were not able to find an explicit reference of it (the closest statement we found is \cite[Exercise 8.7.3]{vershynin2018high}). We include a sketch of its proof for completeness.

\begin{proof}[Sketch of the proof of Proposition~\ref{highprobChevet}]
We define in $L \times K^{\circ}$ the distance
\begin{align*}
d((x,y^*),(\tilde{x},\tilde{y}^*)):= \norm{x - \tilde{x}}_2 \norm{id: \ell_2^n \to X_K} + \norm{y^* - \tilde{y^*}}_2 \norm{id:\ell_2^n \to X_{L^{\circ}}},
\end{align*}
where $\Vert \cdot \Vert_2$ stands for the Euclidean norm.

Consider the random process in $L \times K^{\circ}$ given by $$X_{(x,y^*)} := \langle Ax, y^* \rangle.$$ It is not hard to see that this process is subgaussian for $d$ (see the proof of \cite[Theorem 8.7.1]{vershynin2018high}); i.e., $$\norm{X_{(x,y^*)} - X_{(\tilde{x},\tilde{y}^*)}}_{\psi_2} \leq Cd((x,y^*),(\tilde{x},\tilde{y^*})).$$
Note that if we consider the Gaussian process

$$Y_{(x,y^*)}:=  \langle g, x \rangle \norm{id: \ell_2^n \to X_K} + \langle h, y^* \rangle \norm{id:\ell_2^n \to X_{L^{\circ}}},$$
where $g=(g_1, \dots, g_n)$, $h=(h_1, \dots, h_n)$ and $(g_i)_{i=1}^n, (h_j)_{j=1}^n$ are independent standard Gaussian variables; we have
$$
\Vert Y_{(x,y^*)} - Y_{(\tilde x,\tilde y^*)} \Vert_2 = d((x,y^*),(\tilde{x},\tilde{y^*})).
$$

Combining the generic chaining (tail bound) \cite[Theorem 8.5.5]{vershynin2018high} and Talagrand’s majorizing measure theorem \cite[Theorem 8.6.1]{vershynin2018high} we get
\begin{align*}
\norm{A:X_L \to X_K} =& \sup_{(x,y^*) \in L \times K^{\circ}}X_{(x,y^*)} \\&\ll \left( \E [\sup_{(x,y^*) \in L \times K^{\circ}}Y_{(x,y^*)}] + u \; \diam(K \times L^{\circ}) \right),
\end{align*}
with probability at least $1-e^{-u^2}$.

The result follows by the fact that
$$\E [\sup_{(x,y^*) \in L \times K^{\circ}}Y_{(x,y^*)}]= \ell(K) \norm{id: \ell_2^n \to X_{L^{\circ}}} + \ell(L^{\circ}) \norm{id:\ell_2^n\to X_{K}}$$ and
$\diam(L \times K^{\circ})\sim \norm{id: \ell_2^n \to X_{L^{\circ}}} \cdot  \norm{id: \ell_2^n \to X_{K}}$.
\end{proof}

Given a convex body $W \subset \R^n$ we need to introduce a position $\tilde W$ highly related with the well-known $\ell$-position.
It has been introduced by Rudelson in \cite{rudelson2000distances} and its existence can be also tracked in the proof of the main theorem of the paper of Giannopoulos and Hartzoulaki   \cite{giannopoulos2002volume}, who used this position together with Chevet's inequality to bound the volume ratio. 
   
\begin{proposition} 
Given a convex body $W \subset \R^n$ there is position of $W$,$\tilde{W}$ that satisfies:
\begin{itemize}
\item $\ell(\tilde{W}) \lss \sqrt{n} \log(n)$,
\item $\ell(\tilde{W}^{\circ}) \lss \sqrt{n}$,
\item $\norm{id:\ell_2^n \to X_{\tilde{W}^{\circ}}} \lss \frac{\sqrt{n}}{\log(n)}$.

\end{itemize}
In particular,  $$\frac{1}{|\tilde{W}|^{\frac{1}{n}}} \leq \ell(\tilde W) \lss \sqrt{n}\log(n).$$

\end{proposition}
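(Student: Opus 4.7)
The plan is to combine Pisier's $MM^*$-estimate (to bring $W$ into a position in which $\ell(W)$ and $\ell(W^\circ)$ are simultaneously well-controlled) with a Euclidean truncation designed to push the circumradius down to the target scale $\sqrt{n}/\log(n)$. First I would invoke Pisier's theorem to replace $W$ by a linear image satisfying $\ell(W)\cdot \ell(W^\circ) \lss n \log(n)$. Since a homothety $W \mapsto \lambda W$ leaves this product invariant while scaling $\ell(W)$ by $\lambda$ and $\ell(W^\circ)$ by $\lambda^{-1}$, I then choose $\lambda$ so that $\ell(W) \lss \sqrt{n}\log(n)$ and $\ell(W^\circ) \lss \sqrt{n}$.

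Next I would set $r := \sqrt{n}/\log(n)$ and define $\tilde W := W \cap r\, B_2^n$. Polar duality gives $\tilde W^\circ = \co(W^\circ \cup r^{-1} B_2^n)$, so at the level of Minkowski functionals
\[
\norm{x}_{\tilde W} = \max\bigl(\norm{x}_W,\, r^{-1}\norm{x}_2\bigr) \qquad \text{and} \qquad \norm{x}_{\tilde W^\circ} \leq \norm{x}_{W^\circ}.
\]
From these pointwise identities the three bullets follow immediately. The inclusion $\tilde W \subset r B_2^n$ bounds the Euclidean radius of $\tilde W$ by $r$, and combined with the identification $\norm{id:\ell_2^n \to X_{\tilde W^\circ}} = \sup_{\norm{x}_2 \leq 1} h_{\tilde W}(x) = \sup_{y\in \tilde W}\norm{y}_2$ this yields the third bullet. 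Integrating the inequality $\norm{\cdot}_{\tilde W} \leq \norm{\cdot}_W + r^{-1}\norm{\cdot}_2$ against the standard Gaussian, and using $\E \norm{G}_2 \sim \sqrt{n}$, gives
\[
\ell(\tilde W) \leq \ell(W) + r^{-1}\E \norm{G}_2 \lss \sqrt{n}\log(n) + \log(n) \sim \sqrt{n}\log(n),
\]
while the pointwise domination $\norm{\cdot}_{\tilde W^\circ} \leq \norm{\cdot}_{W^\circ}$ gives $\ell(\tilde W^\circ) \leq \ell(W^\circ) \lss \sqrt{n}$.

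Finally, the ``in particular'' bound $1/|\tilde W|^{1/n} \leq \ell(\tilde W)$ is a universal inequality valid for any centrally symmetric convex body $K$: Urysohn's inequality gives $|K^\circ|^{1/n} \lss w(K^\circ)|B_2^n|^{1/n} \sim \ell(K)/n$, and the Bourgain-Milman reverse Santal\'o inequality gives $|K|^{1/n}|K^\circ|^{1/n} \gss 1/n$; combining the two yields $|K|^{1/n} \gss 1/\ell(K)$. The only substantive ingredient in the whole argument is Pisier's $MM^*$-estimate, so the main technical point is to invoke it correctly and then book-keep the homothety; once that is done, the choice $r = \sqrt{n}/\log(n)$ is forced precisely by equating the two contributions $\ell(W) \sim \sqrt{n}\log(n)$ and $r^{-1}\sqrt{n}$ in the upper bound for $\ell(\tilde W)$, and all three bullets reduce to the elementary pointwise identities for the truncation.
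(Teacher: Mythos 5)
The paper does not prove this proposition at all: it is quoted from Rudelson \cite{rudelson2000distances} (and can be extracted from the argument in \cite{giannopoulos2002volume}), so your proposal is an attempt at a proof from scratch. The quantitative bookkeeping in it is essentially right: starting from a position with $\ell(W)\ell(W^{\circ})\lss n\log(n)$, rebalancing by a homothety, and intersecting with $rB_2^n$ for $r=\sqrt{n}/\log(n)$ does produce a body satisfying all three bullets, and your derivation of the ``in particular'' part from Urysohn and Bourgain--Milman is correct up to an absolute constant.

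There is, however, a genuine gap, and it is precisely the point of Rudelson's lemma: the truncation $\tilde W:=W\cap rB_2^n$ is \emph{not a position of $W$} (not an affine image of $W$), while the proposition asserts the existence of a position. This is not a cosmetic issue. The proposition is used in the paper by putting an \emph{arbitrary} body $L$ into Rudelson position and then invoking the affine invariance of $\vr$ (Remark \ref{propiedades elementales} (5)) to transfer the resulting bound back to $\vr(K,L)$. With your $\tilde L=L\cap rB_2^n$ one only controls $\vr(K,\tilde L)$, and returning to $\vr(K,L)$ via Remark \ref{propiedades elementales} (4) would require $\vr(\tilde L,L)\lss 1$, i.e.\ an affine copy of $L$ of comparable volume inside $L\cap rB_2^n$; the obvious candidate $\frac{r}{R(L)}L$ loses a factor $R(L)/r$, and the circumradius $R(L)$ is uncontrolled in the $\ell$-position. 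Producing a genuine \emph{linear image} of $W$ with simultaneously small circumradius, small mean width and controlled $\ell$-norm is the substantive content of Rudelson's result and requires a different (iterative/variational) argument, not a one-step truncation. A secondary issue: you invoke Pisier's $MM^{*}$-estimate for a general convex body, but the $\log(n)$ bound is a theorem about centrally symmetric bodies; for non-symmetric bodies this strong form is exactly the problem Rudelson's paper addresses. In the paper's applications this is harmless because Equation~\eqref{bastasimetricos} reduces everything to symmetric $L$, but as written your first step is not justified in the stated generality.
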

When a convex body in $\R^n$ satisfies the previous estimates we say it is in \emph{Rudelson position}.

First we will show how Rudelson's position together with Chevet's inequality can be used to bound the largest volume ratio for some natural classes of convex bodies.
Observe that, by Remark \ref{propiedades elementales} (3), bounding simultaneously the determinant (from bellow) and the norm (from above) of an operator gives a bound for the volume ratio.
We will also need the following lower bound for the determinant of a random Gaussian matrix, which can be found in  \cite[Corollary 1]{pivovarov2010determinants}.

\begin{lemma}
Let $A = \left(g_{ij}\right)_{1\leq i,j \leq n} \in \R^{n\times n}$ with $g_{ij} \sim \NN(0,1)$, then with probability at least $1-e^{-n}$ we have
\begin{align}\label{determinante gaussiana}
\det(A)^{\frac{1}{n}} \gss \sqrt{n}.
\end{align}
\end{lemma}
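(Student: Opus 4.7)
The plan is to reduce the lower bound on $|\det A|^{1/n}$ to a product of independent chi random variables, and then apply a Chernoff--Markov bound to the negative moments of that product. The starting point is the classical distributional identity
\[ |\det A| \stackrel{d}{=} \prod_{k=1}^n \chi_k, \]
where $\chi_1,\dots,\chi_n$ are independent and each $\chi_k$ is chi-distributed with $k$ degrees of freedom. This comes from running Gram--Schmidt on the columns of $A$ (equivalently, the QR or Bartlett decomposition): by rotational invariance of the standard Gaussian, the projection of the $k$-th column onto the orthogonal complement of the first $k-1$ columns is a standard Gaussian vector in an $(n-k+1)$-dimensional subspace, whose Euclidean norm is distributed as $\chi_{n-k+1}$ and is independent of the previous steps, while $|\det A|$ factorises as the product of these successive perpendicular lengths.

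Given this identity, I would use the explicit formula for negative chi-moments, $\E[\chi_k^{-t}] = 2^{-t/2}\,\Gamma((k-t)/2)/\Gamma(k/2)$, valid for $0<t<k$. Gautschi's inequality (or a direct Stirling estimate) gives $\E[\chi_k^{-t}] \leq C_t\, k^{-t/2}$ for a constant $C_t$ depending only on $t$. Markov's inequality applied to the nonnegative random variable $\prod_{k=1}^n \chi_k^{-t}$ then yields, for every $c>0$,
\[ \P\bigl(|\det A|^{1/n} \leq c\sqrt{n}\bigr) \;=\; \P\!\Bigl(\prod_{k=1}^n \chi_k^{-t} \geq (c\sqrt n)^{-tn}\Bigr) \;\leq\; (c\sqrt n)^{tn}\, C_t^n\, (n!)^{-t/2}. \]
Applying Stirling, $(n!)^{-t/2}$ equals $n^{-tn/2} e^{tn/2}$ up to sub-exponential corrections, so the right-hand side collapses to $\bigl(c^t\,\widetilde C_t\bigr)^n$ for an explicit constant $\widetilde C_t$. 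Fixing any $t\in(0,1)$ (say $t=1/2$) and choosing $c$ small enough that $c^t \widetilde C_t \leq 1/e$ delivers the required failure probability $e^{-n}$.

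The only delicate point is to check that the base of the exponential in the final estimate is strictly below $1/e$ after Stirling is inserted into $(n!)^{-t/2}$. This is not a real obstacle: the same computation with $t \to 0$ recovers the exact asymptotic $\E|\det A|^{1/n}\sim \sqrt{n/e}$, so there is a genuine open interval of constants $c$ for which the Chernoff bound succeeds with the claimed exponential rate, and it is enough to pick one such $c$ for a fixed value of $t$.
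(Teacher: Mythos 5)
Your proof is correct. Note, however, that the paper does not prove this lemma at all: it imports it as \cite[Corollary 1]{pivovarov2010determinants}, where it is obtained as a special case of Pivovarov's general lower bound for $|\det(\sum_j X_j\otimes e_j)|^{1/n}$ when the columns $X_j$ are sampled from an isotropic log-concave measure (the statement actually used later in the paper, in the proof of Theorem \ref{teo: random dvoretzky}, is that general version). Your argument is the natural self-contained route for the Gaussian case: the Bartlett/Gram--Schmidt identity $|\det A|\stackrel{d}{=}\prod_{k=1}^n\chi_k$ with independent chi factors, the exact formula $\E[\chi_k^{-t}]=2^{-t/2}\Gamma((k-t)/2)/\Gamma(k/2)\le C_t k^{-t/2}$ for fixed $t\in(0,1)$, and a Chernoff--Markov bound on $\prod_k\chi_k^{-t}$ combined with $n!\ge (n/e)^n$, which indeed yields $\P\bigl(|\det A|^{1/n}\le c\sqrt n\bigr)\le (c^t e^{t/2}C_t)^n\le e^{-n}$ for $c$ small. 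All steps check out (the only cosmetic point is that $\det(A)$ should carry an absolute value, an imprecision already present in the paper's statement). Pivovarov's proof follows the same base-times-height philosophy, but for general isotropic log-concave columns the perpendicular components are no longer independent chi variables, so he must work with negative moments of one-dimensional marginals of log-concave measures; your Gaussian-specific argument buys independence and explicit Gamma-function computations, at the cost of not covering the more general statement the paper also needs elsewhere.
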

Combining the last inequality together with Proposition~\ref{high prob upper bound}, we can ensure that for any $u \leq \sqrt{n}$, with probability greater than $1 - 2e^{-u^2}$, a random Gaussian operator $A$ fulfils both,
$\det(A)^\frac{1}{n} \gss \sqrt{n}$,
and
\begin{align}\label{eq: norma chevet}
\norm{A:X_L \to X_K} \lss &\ell(K) \norm{id: \ell_2^n \to X_{L^{\circ}}} + \ell(L^{\circ}) \norm{id:\ell_2^n\to X_{K}} \\ \nonumber &+ u\norm{id: \ell_2^n \to X_{L^{\circ}}} \cdot  \norm{id: \ell_2^n \to X_{K}},
\end{align}
for any pair of convex bodies $K,L \subset \R^n$.

Assume that $L$ is in Rudelson's position. Now combining equations \eqref{eq: norma chevet} and Remark \ref{propiedades elementales} (3),
we have that $\frac{TL}{\norm{T}}  \subset K$ and
\begin{align} \label{eq: cota combinada}
\vrn{K}{\frac{TL}{\norm{T}}}  \lss \ell(K) \vn{K} \sqrt{n} + \log(n) \sqrt{n} \norm{id: \ell_2^n \to X_K}\\
+ u \sqrt{n} \norm{id: \ell_2^n \to X_K} \vn{K}, 
\end{align}
with probability greater on equal to $1 - 2e^{-u^2}$.

In the following we are going to use the previous relations to bound the volume ratio for some classes of convex body. 

\subsection{Unitary invariant norms}

Recall that unitary invariant norm $\NN$ on $\R^{d \times d}$, that is a norm that satisfies $\NN(UTV) = \NN(T)$ for all $U,V \in \OO(d)$ (the group of distance-preserving linear transformations of a Euclidean space of dimension $d$). 
The norm $\sigma_p$ is one of the most important unitary invariant operator norms.
It is known that for any unitary invariant norm $\NN$ there is a $1-$symmetric norm $\tau$ such that for every $T \in \R^{d \times d}$
\begin{align*}
\NN(T) = \tau (s_1(T),\dots,s_n(T)).
\end{align*}
Assume that $\tau(e_i) = 1$ and set $u:= \sum_{i=1}^d e_i$. By \cite[Equation (4.3.6)]{brazitikos2014geometry}
\begin{align}\label{relacion Schatten}
\frac{1}{\tau(u)}\SS_{\infty}^d \subset B_{\NN} \subset \frac{d}{\tau(u)}\SS_1^d.
\end{align}
Taking volumes we have that
\begin{align*}
\frac{1}{\tau(u)}|\SS_{\infty}^d |^{\frac{1}{d^2}} \leq |B_{\NN}|^{\frac{1}{d^2}} \leq  \frac{d}{\tau(u)}|\SS_1^d|^{\frac{1}{d^2}}.
\end{align*}
As $|\SS_{\infty}^d |^{\frac{1}{d^2}} \sim d|\SS_1^d|^{\frac{1}{d^2}}$ \cite[Lemma 4.3.2]{brazitikos2014geometry}, we conclude that
\begin{align} \label{vr schatten s1 y sinfty}
    \vr(B_{\NN},\SS_{\infty}^d) \sim \vr(\SS_1^d,B_{\NN}) \sim 1.
\end{align}

\begin{theorem} \label{high prob upper bound}
Let $B_{\NN}$ be the unit ball of any unitary invariant norm $\NN$ in $\R^{d \times d}$ and $L \subset \R^{d^2}$ a convex body in Rudelson position, and let $A = \left(g_{ij}\right)_{1\leq i,j \leq d^2} \in \R^{d^2 \times d^2}$ be a random matrix with independent Gaussian entries $g_{ij}\sim\NN(0,1)$. Then with probability greater than $1 -2 e^{-d}$, the body $\tilde{L}:= \frac{AL}{\norm{A}} \frac{1}{\tau(u)} \subset B_{\NN}$ and also $$\frac{\vd{B_{\NN}}}{\vd{\tilde{L}}} \ll d.$$
\end{theorem}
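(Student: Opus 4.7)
The strategy is to pivot through the Schatten-$\infty$ unit ball $\SS_\infty^d$. Reading $\norm{A}$ in the statement as $\norm{A:X_L \to X_{\SS_\infty^d}}$, one has $AL/\norm{A} \subset \SS_\infty^d$, so the containment part of the theorem follows at once from the left half of \eqref{relacion Schatten}:
\begin{align*}
\tilde L \;=\; \frac{AL}{\norm{A}\,\tau(u)} \;\subset\; \frac{1}{\tau(u)}\SS_\infty^d \;\subset\; B_{\NN}.
\end{align*}

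For the volume-ratio bound, I would apply Proposition~\ref{highprobChevet} to the pair $(K,L)=(\SS_\infty^d,L)$ with deviation parameter $\sqrt d$ and combine it with the Gaussian determinant bound \eqref{determinante gaussiana}. Since $n=d^2$, both events hold simultaneously with probability at least $1-2e^{-d}$ and yield $|\det A|^{1/d^2} \gss d$. The relevant parameters of $\SS_\infty^d \subset \R^{d^2}$ are classical: $\ell(\SS_\infty^d) \sim \sqrt d$ (expected Gaussian operator norm), $\vd{\SS_\infty^d} \sim 1/\sqrt d$ (see \cite[Lemma 4.3.2]{brazitikos2014geometry}), and $\norm{id:\ell_2^{d^2}\to X_{\SS_\infty^d}} = 1$ (because $\sigma_\infty \le \sigma_2$). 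Plugging these together with the Rudelson estimates $\ell(L^\circ) \lss d$, $\norm{id:\ell_2^{d^2}\to X_{L^\circ}} \lss d/\log d$ and $\vd{L} \gss 1/(d\log d)$ into Remark~\ref{propiedades elementales}(3) gives, after the logarithmic factors cancel,
\begin{align*}
\frac{\vd{\SS_\infty^d}}{\vd{AL/\norm{A}}} \;\le\; \frac{\norm{A}\,\vd{\SS_\infty^d}}{|\det A|^{1/d^2}\,\vd{L}} \;\ll\; d.
\end{align*}

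To transfer this estimate to $B_{\NN}$, I use the right half of \eqref{relacion Schatten}, $B_{\NN} \subset (d/\tau(u))\SS_1^d$, together with $\vd{\SS_1^d} \sim (1/d)\vd{\SS_\infty^d}$ (also \cite[Lemma 4.3.2]{brazitikos2014geometry}); this yields $\vd{B_{\NN}}\,\tau(u) \lss \vd{\SS_\infty^d}$, and hence
\begin{align*}
\frac{\vd{B_{\NN}}}{\vd{\tilde L}} \;=\; \frac{\vd{B_{\NN}}\,\tau(u)}{\vd{AL/\norm{A}}} \;\lss\; \frac{\vd{\SS_\infty^d}}{\vd{AL/\norm{A}}} \;\ll\; d,
\end{align*}
as required.

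The main delicacy is the precise bookkeeping of the logarithms. Rudelson's position is crucial here: without the extra $1/\log d$ factor in $\norm{id:\ell_2^{d^2}\to X_{L^\circ}}$, Chevet's inequality would only give $\norm{A} \lss d\sqrt d$, which after division by $|\det A|^{1/d^2}\vd{L} \gss 1/\log d$ would produce a spurious $\log d$ in the final bound. With Rudelson's estimates, Chevet delivers $\norm{A} \lss d\sqrt d/\log d$, and the two $\log d$'s cancel exactly, leaving the target $d$. A minor notational inconvenience is the clash between the vector $u=\sum_{i=1}^d e_i$ appearing in $\tau(u)$ and the probability-deviation parameter named $u$ in Proposition~\ref{highprobChevet}; one should simply rename the latter (e.g.\ to $t$) to avoid confusion.
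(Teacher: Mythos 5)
Your proof is correct and follows essentially the same route as the paper's: pivot through $\SS_\infty^d$ via the sandwich \eqref{relacion Schatten}, combine the high-probability Chevet bound (with deviation parameter $\sqrt d$) and the Gaussian determinant estimate \eqref{determinante gaussiana} with the Rudelson-position parameters of $L$ and the classical values $\ell(B_{\SS_\infty^d})\sim\sqrt d$, $\vd{B_{\SS_\infty^d}}\sim d^{-1/2}$, $\norm{id:\ell_2^{d^2}\to X_{\SS_\infty^d}}=1$, exactly as in Equation \eqref{eq: cota combinada}. Your bookkeeping of the logarithms and the observation about the clash of the two uses of $u$ are both accurate.
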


As a consequence of Theorem \ref{teo principal}, the previous result and Remark \ref{propiedades elementales} (5) we obtain the following corollary.
\begin{corollary} \label{cota superior unitary}
Let $B_{\NN}$ be the unit ball of any unitary invariant norm $\NN$ in $\R^{d \times d}$. Then,
\begin{align}
    \lvr(B_{\NN}) \sim d.
\end{align}

\end{corollary}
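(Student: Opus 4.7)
The plan is to prove the two inequalities $\lvr(B_{\NN}) \gg d$ and $\lvr(B_{\NN}) \ll d$ separately, and then invoke the definition of $\sim$.

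For the lower bound, I would simply invoke Theorem \ref{teo principal}. The body $B_{\NN}$ lives in $\R^{d \times d}$, which as a vector space has dimension $n = d^2$, so the general lower bound $\sqrt{n} \ll \lvr(K)$ reads $d = \sqrt{d^2} \ll \lvr(B_{\NN})$. This direction is immediate and requires no further work.

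For the upper bound, the plan is to show that $\vr(B_{\NN}, L) \ll d$ for every convex body $L \subset \R^{d^2}$, and then take the supremum. First I would use \eqref{bastasimetricos} to reduce the problem: it suffices to bound $\vr(B_{\NN}, L)$ when $L$ is centrally symmetric, paying only an absolute multiplicative constant. Next, given such an $L$, by Remark \ref{propiedades elementales} (5) the volume ratio $\vr(B_{\NN}, L)$ depends only on the affine class of $L$, so I may replace $L$ with any affine image, in particular with a body in Rudelson position. At this point Theorem \ref{high prob upper bound} applies: it guarantees that with probability at least $1 - 2e^{-d}$ (in particular, on a nonempty set of realizations) the random linear map $A$ produces a body $\tilde L = \frac{AL}{\|A\|}\cdot\frac{1}{\tau(u)} \subset B_{\NN}$ satisfying $|B_{\NN}|^{1/d^2}/|\tilde L|^{1/d^2} \ll d$. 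Since such an affine image exists, the infimum over affine transformations in the definition \eqref{volumeratiogeneralizado} of $\vr(B_{\NN}, L)$ is bounded by $cd$.

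Combining both directions I obtain $\lvr(B_{\NN}) \sim d$. There is really no technical obstacle here: both directions have been prepared by earlier results (Theorem \ref{teo principal} on the lower side, Theorem \ref{high prob upper bound} on the upper side), and all that remains is to chain them together. The only point deserving a brief check is the use of \eqref{bastasimetricos} to restrict the supremum to centrally symmetric bodies, which is needed because Theorem \ref{high prob upper bound} is stated for symmetric $L$ in Rudelson position.
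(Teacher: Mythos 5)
Your proposal is correct and follows essentially the same route as the paper, which derives the corollary from Theorem \ref{teo principal} (lower bound, with $n=d^2$), Theorem \ref{high prob upper bound} (upper bound via a realization of the random map $A$), and Remark \ref{propiedades elementales} (5) to pass to Rudelson position. Your explicit use of \eqref{bastasimetricos} to restrict to centrally symmetric $L$ is a sensible way to make the reduction precise.
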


\begin{proof}[Proof of Theorem~\ref{high prob upper bound}]

Note that by \cite[Lemma 4.3.2]{brazitikos2014geometry} we know that $|B_{\SS^d_{\infty}}|^{\frac{1}{d^2}} \sim d^{-\frac{1}{2}}$  and also by \cite[Excercise 7.24]{aubrun2017alice} $\ell(B_{\SS^d_{\infty}}) \sim d^{\frac{1}{2}}$, hence $\ell(B_{\SS^d_{\infty}}) |B_{\SS^d_{\infty}}|^{\frac{1}{d^2}} \sim 1.$
On the other hand, since $\SS_{\infty}^d$ has enough symmetries; by Equation~\eqref{vr simetrias} and by \cite[Excercise 7.24]{aubrun2017alice} (see also \cite{kabluchko2018exact}) we know that
\begin{align*}
  \vr(B_{\SS_1^d})= \norm{id:\ell_2^{d^2} \to \SS_{\infty}^d} \cdot d \cdot  |B_{\SS_{\infty}^d}|^{\frac{1}{d^2}} \sim \sqrt{d}.
\end{align*}

Using the fact that $L$ is in Rudelson's position, by Equation~\eqref{eq: cota combinada} with $K=\SS_{\infty}^d$, $n=d^2$ and $u = \sqrt{d}$, we have that  $A(L) \subset \norm{A} \SS_{\infty}^d$, and
$$\left(\frac{\norm{A}|\SS_{\infty}^d|}{|A(L)|}\right)^{\frac{1}{d}} \leq d, $$ 
 with probability greater than $1-2e^{-d}$. 
 
 By Equation~\eqref{relacion Schatten},
\begin{align*}
\tilde{L} := \frac{1}{\lambda(\tau)}\frac{A(L)}{\norm{A}} \subset \frac{1}{\lambda(\tau)}\SS_{\infty}^d \subset B_{\NN}
\end{align*}
As $\vd{\frac{1}{\lambda(\tau)}\SS^d_{\infty}} \sim \vd{B_{\NN}}$ we obtain the desired bound.
\end{proof}

\subsection{Tensor products}
Another natural class of convex bodies for which we can obtain sharp asymptotic bounds for the largest volume ratio are the unit balls  of tensor products of $\ell_p$-spaces endowed with the projective or injective tensor norm.

In order to do prove  Theorem \ref{lvr tensores},  we need to have estimates of some geometrical parameters of the involved spaces.
Defant and Prengel \cite{defant2009volume} obtained asymptotic estimates for many of them. We summarize their results in the next proposition.

\begin{proposition}
For $m \in \N$ set $d = n^m$ and $d_s =\binom{m + n - 1}{n - 1}$.  For each  $1 \leq p \leq \infty$ we have
\begin{enumerate}
\item\label{vol1} $\left|B_{\tenes}\right|^{\frac{1}{d_s}} \sim \left|B_{\tene}\right|^{\frac{1}{d}} \sim \begin{cases} 
      n^{m(\frac{1}{2} - \frac{1}{p}) - \frac{1}{2}} & p\leq 2 \\
      n^{-\frac{1}{p}} & p\geq 2.
   \end{cases}$
\medskip   
   \item \label{vol2}$\left|B_{\tenps}\right|^{\frac{1}{d_s}} \sim \left|B_{\tenp}\right|^{\frac{1}{d}} \sim \begin{cases} 
      n^{1 - \frac{1}{p} - m} & p\leq 2 \\
      n^{\frac{1}{2}-m(\frac{1}{2}+\frac{1}{p})} & p\geq 2.
   \end{cases}$
   \medskip  
   \item \label{ele1}$\ell(B_{\tenes}) \sim \ell(B_{\tene}) \sim \begin{cases} 
      n^{m(\frac{1}{p} - \frac{1}{2}) + \frac{1}{2}} & p\leq 2 \\
      n^{\frac{1}{p}} & p\geq 2.
   \end{cases}$
 \medskip    
   \item\label{ele2} $\ell(B_{\tenps}) \sim \ell(B_{\tenp}) \sim \begin{cases} 
      n^{m - 1 + \frac{1}{p}} & p\leq 2 \\
      n^{m(\frac{1}{2} + \frac{1}{p}) - \frac{1}{2}} & p\geq 2.
   \end{cases}$   
   \medskip  
  \item\label{id1}$\norm{ id: \ell_2^{d_s} \to \tenes} \sim \norm{id: \ell_2^d \to \tene} \sim \begin{cases} 
		 n^{m(\frac{1}{2} - \frac{1}{p})}  & p \leq 2 \\      
     1 & p \geq  2.
   \end{cases}$
  \medskip  
  
   \item \label{id2}$\norm{ id: \ell_2^{d_s} \to \tenps} \sim \norm{id: \ell_2^d \to \tenp} \sim \begin{cases} 
	        n^{\frac{m}{2} + \frac{1}{p}-1} & p\leq 2\\
      n^{\frac{m}{p} - \frac{1}{2}} & 2 \leq  p\leq 2m \\
   
 	1  & p \geq 2m.  
   \end{cases}$
\end{enumerate}
\end{proposition}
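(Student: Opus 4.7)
All six asymptotic estimates follow from (or can be extracted from) \cite{defant2009volume}. The plan is to organize the items around three standard dualities: (a) the Blaschke--Santal\'o/Bourgain--Milman inequality \eqref{santalo}, which compares $|K|$ and $|K^{\circ}|$; (b) the identity $\ell(K) \sim \sqrt{\dim}\, w(K^{\circ})$ relating the $\ell$-norm and the mean width of the polar; and (c) the trace-type duality $(B_{\tene})^\circ \sim B_{\bigotimes_{\pi}^{m} \ell_{p'}^n}$ (and its symmetric analogue), which exchanges the projective and injective tensor norms upon polarity while replacing $p$ by its conjugate exponent $p'$. Modulo these three relations it is enough to control one parameter of each type -- volume, $\ell$-norm and identity operator norm -- in, say, the full injective case, and then transfer to the remaining seven ``slots''.

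\textbf{Volumes and $\ell$-norms, items (\ref{vol1})--(\ref{ele2}).} I would quote from \cite{defant2009volume} the two fundamental estimates in the full injective setting: the volume $|B_{\tene}|^{1/d}$, obtained via an explicit integral representation combined with Stirling asymptotics and classical moment formulas for vectors in $\ell_{p'}^n$; and $\ell(B_{\tene})$, which is the expected norm of a standard Gaussian $m$-linear form on $\ell_{p'}^n$ and can be estimated by iterating the Gaussian Chevet inequality $m$ times (compare with the single-step argument used for the Schatten classes above). The corresponding projective statements (\ref{vol2}) and (\ref{ele2}) then follow from (\ref{vol1}) and (\ref{ele1}) via the dualities (a)--(c), keeping careful track of the exponent change $p \leftrightarrow p'$.

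\textbf{Identity operator norms, items (\ref{id1})--(\ref{id2}).} For $\tene$ we have $\|id : \ell_2^d \to \tene\| = \sup\{\varepsilon(x) : \|x\|_2 \le 1\}$. Lower bounds come from testing on simple tensors $x = x_1 \otimes \cdots \otimes x_m$ with $\|x_i\|_2 = 1$: then $\|x\|_2 = 1$ and $\varepsilon(x) = \prod \|x_i\|_p$, which is maximized on a uniform diagonal vector for $p \le 2$ and on a coordinate vector for $p \ge 2$, matching the claimed exponents. The corresponding upper bounds follow from the metric embedding $\ell_2^n \hookrightarrow \ell_p^n$ combined with a Gaussian comparison argument. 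Item (\ref{id2}) is then deduced by duality (c). Finally, the symmetric versions of all six items coincide asymptotically with the full ones: the symmetrization projection has norm bounded by the polarization constant $m^m/m!$ in both the projective and injective tensor norms, and since $m$ is treated as fixed this constant is absolute and is absorbed into our asymptotic notation.

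\textbf{Main obstacle.} The subtlest point is the three-regime behavior of item (\ref{id2}), and in particular the threshold $p = 2m$ separating the middle and large-$p$ regimes. This threshold is invisible to duality alone; capturing it requires the fine moment analysis of Gaussian $m$-linear forms developed in \cite{defant2009volume}, which relies on the type-$2$ constant of $\ell_p^n$ together with Khintchine--Kahane inequalities for homogeneous polynomials.
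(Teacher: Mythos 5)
The paper gives no independent proof of this proposition: it is assembled entirely from citations to \cite{defant2009volume} (Theorem 4.2 for the volumes, Lemma 4.3 for the $\ell$-norms, Lemma 5.2 for item (6), Proposition 3.1 for the full-versus-symmetric comparison), the only supplied argument being the multiplicativity $\norm{id:\ell_2^{n^m}\to\tene}=\norm{id:\ell_2^n\to\ell_p^n}^m$ for item (5) --- and your simple-tensor computation is essentially that same argument. So you and the paper agree on where the real work lives. The problem is your attempt to halve that work by deducing the projective items from the injective ones ``via the dualities (a)--(c)'': this works for the volumes but fails for items (4) and (6).

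For the volumes, $(B_{\tene})^\circ=B_{\bigotimes_{\pi}^{m}\ell_{p'}^n}$ plus Blaschke--Santal\'o/Bourgain--Milman does give (2) from (1), and the exponents check out. For the $\ell$-norms it gives nothing: the relation $\ell(K)\sim\sqrt{N}\,w(K^\circ)$ is a restatement of the definition, so the chain $\ell(B_{\tenp})\sim\sqrt{N}\,w\bigl(B_{\bigotimes_{\varepsilon}^{m}\ell_{p'}^n}\bigr)\sim\ell\bigl((B_{\bigotimes_{\varepsilon}^{m}\ell_{p'}^n})^\circ\bigr)=\ell(B_{\tenp})$ is circular. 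What duality genuinely provides is the one-sided bound $\ell(K)\ell(K^\circ)\gg N$; the matching upper bound is an $MM^*$/$K$-convexity statement that is false up to absolute constants in general (already for $\ell_1^n$ it fails by $\sqrt{\log n}$), so the upper estimate in item (4) --- i.e.\ the expected projective norm of a Gaussian tensor --- needs the direct computation of \cite[Lemma 4.3]{defant2009volume} and cannot be read off from item (3). The same objection kills ``item (6) is then deduced by duality (c)'': trace duality converts $\norm{id:\ell_2^d\to\tenp}$ into $\norm{id:\bigotimes_{\varepsilon}^{m}\ell_{p'}^n\to\ell_2^d}$, the identity in the \emph{reverse} direction, which is not the quantity estimated in item (5); the extra regime at $p=2m$, which you yourself flag as ``invisible to duality,'' is the symptom, and that final concession contradicts the sentence claiming (6) follows by duality. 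As written the proposal therefore does not close for (4) and (6); both must be quoted (or reproved) directly. A last small point: your extremal simple tensors yield $\norm{id:\ell_2^n\to\ell_p^n}^m=n^{m(1/p-1/2)}$ for $p\le2$, which is what the paper's own justification of (5) gives; the exponent printed in the statement has the sign reversed, so your claim that the computation ``matches the claimed exponents'' is only true for the corrected formula.
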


All the proofs can be found in \cite{defant2009volume}. The comparison between the full and symmetric tensor products follows from \cite[Proposition 3.1]{defant2009volume}.
The estimates \eqref{vol1} and \eqref{vol2} are in \cite[Theorem 4.2]{defant2009volume}. For \eqref{ele1} and \eqref{ele2} see \cite[Lemma 4.3]{defant2009volume}. The proof of \eqref{id1} follows form the fact that
\begin{align*}
\norm{id: \ell_2^{n^m} \to \tene} = \norm{id: \ell_2^n \to \ell_p^n}^m.
\end{align*}
For \eqref{id2}  the result is stated in \cite[Lemma 5.2]{defant2009volume}.

In particular, for every space $X$ involved in the last proposition, we have  $$\ell(B_X)|B_X|^{\frac{1}{\dim(X)}} \sim 1.$$

Fix an arbitrary body $L$ in Rudelson position. 
If $K = B_X$, $N = \dim(X)$ and $A$ is a random Gaussian matrix, we have by equation \eqref{eq: cota combinada} 
\begin{align*}
\left(\frac{\norm{A}|K|}{|A(L)|}\right)^{\frac{1}{N}} \lss \sqrt{N} + (\log{N} + u)\sqrt{N} \norm{id: \ell_2^N \to X} |B_E|^{\frac{1}{N}}.
\end{align*}
with probability greater than $1 - 2 e^{-u^2}$.
Now, if we take for example, $X = \tene$  with $p \leq 2$, we have that $$\norm{id: \ell_2^N \to \tene} |B_{\tene}|^{\frac{1}{N}} = 
 n^{2m(\frac{1}{2} - \frac{1}{p}) - \frac{1}{2}}.$$
So, taking $u = n^{-2m(\frac{1}{2} - \frac{1}{p}) + \frac{1}{2}} \geq \log(N)$, we get
$\left(\frac{\norm{A}|K|}{|A(L)|}\right)^{\frac{1}{N}} \lss \sqrt{N}.$
It can be checked that in all cases,  $\norm{id: \ell_2^N \to X} |B_X|^{\frac{1}{N}} \lss \frac{1}{\log(N)}$. So, choosing $u^{-1} =  \norm{id: \ell_2^N \to E} |B_E|^{\frac{1}{N}}$ we have that
with high probability $\left(\frac{\norm{A}|K|}{|A(L)|}\right)^{\frac{1}{N}} \lss \sqrt{N}.$

Arguing analogously for the other cases we obtain Theorem \ref{lvr tensores}. Note that in fact we have obtained a high probability version of this theorem (a statement similar to Theorem \ref{high prob upper bound}).
\bigskip

Tensor products  can be identified  naturally with multilinear forms or homogeneous polynomials.

From now we assume that $E$ is a finite dimensional space. 
The space of bounded $m$-linear forms on $E$, endowed with the usual supremum norm,  will be denoted by $\mathcal{L}(^m E)$.
Note that the tensor $\tenee$ coincides with the space of $m$-linear operators \index{operator!$m$-linear} defined on $(E')^m$.

Recall that an operator $T: E^m \to \R$ is \emph{$m$-nuclear} \index{operator!$m$-nuclear} if can be written as
\begin{align*}
T = \sum_{i=1}^{\infty} \varphi^i_1 \dots \varphi^i_m,
\end{align*}
with $\varphi \in E'$ and $\sum_{i=1}^{\infty} \norm{\varphi^i_1}_{E'} \dots \norm{\varphi^i_m}_{E'} < \infty$. We can define the following norm on the space of all $m$-nuclear operators
\begin{align*}
\norm{T}_{nuc} = \inf\{\sum_{i=1}^{\infty} \norm{\varphi^i_1}_{E'} \dots \norm{\varphi^i_m}_{E'}\},
\end{align*}
where infimum is taken over all representation of $T$ as above.  We denote this space by $\mathcal{L}_{nuc}(^m E)$. 

The space of all $m$-nuclear operators on $(E')^n$, $\mathcal{L}_{nuc}(^m E')$, can be identified with $\tenpe$.

The tensor products $\tenese$ and $\tenpse$ can be represented as spaces of polynomials.
Recall that a function $p: X \to \R$ is said to be an $m$-homogeneous polynomial if there is an $m$-linear form $\phi : E^m \to \R$ such that $p(x) = \phi(x,\dots,x)$.  We write $\PP(^mE)$ for the set of $m$-homogeneous polynomials on $E$. If we define in $\PP(^mE')$ the norm, $\norm{p} := sup_{x \in B_E} |p(x)|,$
the space is isometric to $\tenese$.

An $m$-homogeneous polynomial is said to be nuclear if it can be written as
\begin{align*}
p(x)= \sum_{i=1} ^\infty \lambda_i (\varphi_i(x))^m, 
\end{align*}
where $\lambda_i \in \R$, $\varphi_i \in E'$ and $\sum_{i=1} ^\infty |\lambda_i|\norm{\varphi_i}_{E'} < \infty$. We write $\PP_{nuc}(^mE)$ for the space of nuclear polynomials endowed with the norm
$$\norm{p}_{nuc} = \inf\left\{ \sum_{i=1} ^\infty |\lambda_i|\norm{\varphi_i}_{E'}\right\}, $$
where the infimum is taken over all representations of $p$ as above. Note that the space $\PP_{nuc}(^mE')$ is isometrically isomorphic to $\tenpse$.

Using these identifications and Theorem \ref{lvr tensores} we have the following corollary.

\begin{corollary}
Let $1\leq p \leq \infty$ and $X$ either $\mathcal{L}(^m\ell_p^n), \mathcal{L}_{nuc}(^m\ell_p^n), \mathcal{P}(^m\ell_p^n)$  or $\mathcal{P}_{nuc}(^m\ell_p^n)$. Then we have 
\begin{align}
\lvr(B_X) \sim \sqrt{\dim(X)},
\end{align}
where $\dim(X)$ stands for the dimension of $X$ as a vector space.
\end{corollary}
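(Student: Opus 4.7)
The plan is to deduce this corollary as a direct consequence of Theorem~\ref{lvr tensores} combined with the natural isometric identifications between (symmetric) tensor products of $\ell_p^n$ and spaces of (nuclear) multilinear forms and (nuclear) homogeneous polynomials that were recalled immediately above the statement. The conceptual input is a single observation: $\lvr$ is invariant under linear isometries of the underlying normed space. Indeed, by Remark~\ref{propiedades elementales}(5), the quantity $\vr(K,L)$ depends only on the affine equivalence classes of $K$ and $L$, so the supremum $\lvr(K)=\sup_{L}\vr(K,L)$ inherits the same invariance. Hence if $X$ and $Y$ are isometrically isomorphic finite-dimensional normed spaces (so $B_X$ and $B_Y$ differ by an invertible linear map), then $\lvr(B_X)=\lvr(B_Y)$, and of course $\dim(X)=\dim(Y)$.

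With this observation in hand, I would record the four isometric identifications already established in the preliminaries:
\[
\tenee\cong\mathcal{L}(^m E'),\quad \tenpe\cong\mathcal{L}_{nuc}(^m E'),\quad \tenese\cong\mathcal{P}(^m E'),\quad \tenpse\cong\mathcal{P}_{nuc}(^m E'),
\]
and then specialize them to $E=\ell_{p'}^n$, whose dual is $E'=\ell_p^n$ (with the standard conventions $1'=\infty$, $\infty'=1$). Since $p\mapsto p'$ is an involution of $[1,\infty]$, letting $p$ run over $[1,\infty]$ on the right is equivalent to letting $p'$ run over $[1,\infty]$ on the left. Thus each of the four spaces $\mathcal{L}(^m\ell_p^n)$, $\mathcal{L}_{nuc}(^m\ell_p^n)$, $\mathcal{P}(^m\ell_p^n)$, $\mathcal{P}_{nuc}(^m\ell_p^n)$ is isometrically isomorphic to the corresponding tensor product ($\tene$, $\tenp$, $\tenes$, or $\tenps$ respectively) built over $\ell_{p'}^n$.

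Finally, I would apply Theorem~\ref{lvr tensores} on the tensor-product side to obtain $\lvr(B_Y)\sim\sqrt{\dim(Y)}$ for the four spaces $Y$ listed there, and transport this estimate back through the isometry by invoking the linear-isometry invariance of $\lvr$ discussed above. This yields $\lvr(B_X)\sim\sqrt{\dim(X)}$ for each of the four spaces $X$ appearing in the corollary. I do not anticipate any substantive obstacle: the statement is essentially a bookkeeping consequence of Theorem~\ref{lvr tensores}, and the only mildly delicate point is checking that the identifications are genuinely isometric (not merely algebraic), but this is precisely what the preceding paragraphs of the paper assert.
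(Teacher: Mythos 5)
Your proposal is correct and follows the same route as the paper, which deduces the corollary directly from Theorem~\ref{lvr tensores} via the isometric identifications of $\tene$, $\tenp$, $\tenes$, $\tenps$ with the corresponding spaces of multilinear forms and polynomials. Your explicit remarks on the linear-isometry invariance of $\lvr$ (via Remark~\ref{propiedades elementales}(5)) and on the duality swap $p\mapsto p'$ only make precise what the paper leaves implicit.
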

\subsection{Largest volume ratio for unconditional convex bodies and random Dvoretzky-Rogers' parallelepiped}

Let $K$ be an unconditional convex body in $\R^n$ and $L$ be a centrally symmetric convex body; the following statement shows a way to find positions of $L$ (say $\tilde L$), with extremely high probability,  verifying $\tilde L \subset K$, with ratio $\left(\frac{|K|}{|\tilde L|}\right)^{\frac{1}{n}}$ bounded by $\sqrt{n}$.

\begin{theorem}\label{teo: aleatorio posicion}
Let $L \subset \R^n$ be a centrally symmetric convex body such that $L^{\circ}$ is in isotropic position and consider the random matrix
$T:=\sum_{j=1}^n X_j \otimes e_j$, where $X_1, \dots, X_n$ are independently chosen accordingly to the uniform measure in the isotropic body $L^{\circ}$. With probability greater than or equal to $1 -e^{-n}$, for every unconditional isotropic body $K \subset \R^n$, the position  $\tilde L:=\frac{1}{2 \sqrt {\pi e}}  \cdot T(L)$ lies inside $K$ and
\begin{align}
\left(\frac{|K|}{|\tilde L|}\right)^{\frac{1}{n}} \ll \frac{\sqrt{n}}{L_{L^{\circ}}}.
\end{align}
\end{theorem}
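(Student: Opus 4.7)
The plan is to establish, on a single event of probability at least $1-e^{-n}$ for the random matrix $T$, both the inclusion $\tilde L\subset K$ uniformly in every unconditional isotropic $K$ and the volume bound $(|K|/|\tilde L|)^{1/n}\ll \sqrt n/L_{L^{\circ}}$.

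For the volume bound, I would write $|\tilde L|^{1/n}=(2\sqrt{\pi e})^{-1}|\det T|^{1/n}|L|^{1/n}$. Since $L^{\circ}$ is isotropic we have $|L^{\circ}|=1$, and the Blaschke-Santaló / Bourgain-Milman inequality~\eqref{santalo} gives $|L|^{1/n}\sim 1/n$; combined with $|K|=1$, the ratio reduces to $n/|\det T|^{1/n}$. The columns of $T$ are i.i.d.\ log-concave random vectors with covariance $L_{L^{\circ}}^{2}\, I$, so a Pivovarov-type tail estimate for determinants of such matrices --- the ``randomized Dvoretzky-Rogers'' input the authors explicitly advertise --- should give $|\det T|^{1/n}\gss L_{L^{\circ}}\sqrt n$ on an event of probability at least $1-e^{-n}$, yielding the claim.

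For the inclusion, $\tilde L\subset K$ is equivalent to $\|T:X_{L}\to X_{K}\|\leq 2\sqrt{\pi e}$, or dually to $\sup_{y\in K^{\circ}}\|T^{*}y\|_{X_{L^{\circ}}}\leq 2\sqrt{\pi e}$. The crucial use of unconditionality of $K$ enters through the Bobkov-Nazarov inequality, which provides a universal sub-Gaussian control of the coordinate marginals of any unconditional isotropic body, and hence a lower bound for $\|\cdot\|_{K^{\circ}}$ that is \emph{independent} of the particular $K$. This allows one to replace the supremum over the noncompact family $\{K^{\circ}:K\text{ unconditional isotropic}\}$ by a supremum over a fixed set of test vectors --- morally, the sign vectors $\varepsilon\in\{\pm1\}^{n}$, after exploiting sign-invariance of $K^{\circ}$ --- reducing the inclusion to a single event on $T$. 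For each fixed $\varepsilon$, the vector $T^{*}\varepsilon$ has i.i.d.\ centered log-concave entries of variance $nL_{L^{\circ}}^{2}$, and its $X_{L^{\circ}}$-norm can be bounded with high probability by a standard log-concave concentration estimate; a union bound over the $2^{n}$ choices of $\varepsilon$ then delivers the uniform statement. The specific constant $2\sqrt{\pi e}$ arises as the sharp one-dimensional bound on the density of an isotropic log-concave measure entering the concentration step.

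The main technical obstacle is precisely the uniformity of the inclusion over all unconditional isotropic $K$: this family is noncompact and no naive union bound over it is available. The Bobkov-Nazarov structural inequality is exactly what defuses this obstacle, turning the problem into a single concentration statement on the random matrix $T$ (whose columns are log-concave and isotropic), and it is also the reason the unconditionality of $K$ is essential for the argument.
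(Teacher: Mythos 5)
Your overall skeleton is right --- Pivovarov's determinant estimate for the volume, Blaschke--Santal\'o/Bourgain--Milman for $|L|^{1/n}\sim 1/n$, and Bobkov--Nazarov to get uniformity over all unconditional isotropic $K$ --- and the volume-bound half of your argument is essentially the paper's. But the inclusion half contains a genuine misstep: no concentration, no sub-Gaussian marginal estimate, and no union bound over sign vectors is needed, and the argument you sketch there would not work as stated. The point you are missing is that the inclusion $T(L)\subset B_\infty^n$ is \emph{deterministic}: since $Ty=(\langle X_1,y\rangle,\dots,\langle X_n,y\rangle)$ with each $X_i\in L^{\circ}$, the definition of the polar gives $|\langle X_i,y\rangle|\leq 1$ for every $y\in L$, hence $\norm{T:X_L\to\ell_\infty^n}\leq 1$ surely. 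Bobkov--Nazarov (Proposition \ref{contencion}) then supplies the fixed inclusion $\frac{1}{2\sqrt{\pi e}}B_\infty^n\subset K$ for \emph{every} unconditional isotropic $K$, so $\tilde L=\frac{1}{2\sqrt{\pi e}}T(L)\subset K$ holds on the whole probability space; the only random event is the determinant lower bound. This is exactly how the paper proceeds (Theorem \ref{teo: random dvoretzky} plus the first inclusion in \eqref{eq bobkov}), and it is also where the constant $2\sqrt{\pi e}$ actually comes from --- it is the Bobkov--Nazarov constant, not a density bound entering a concentration step.

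Concretely, two of your claims are false as written. First, $T^{*}\varepsilon=\sum_i\varepsilon_i X_i$ is a sum of i.i.d.\ isotropic log-concave \emph{vectors}; its coordinates are neither independent nor individually of variance $nL_{L^{\circ}}^{2}$, so the ``standard log-concave concentration estimate'' you invoke does not apply in the form you describe, and a union bound over $2^{n}$ sign vectors against a target failure probability $e^{-n}$ would in any case leave no room. Second, the relevant test vectors for the inclusion $T(L)\subset B_\infty^n$ are $\pm e_i$ (the extreme points of $B_1^n=(B_\infty^n)^{\circ}$), not the sign vectors, and testing against them reduces to the tautology $|\langle X_i,y\rangle|\leq 1$. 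Once you replace your probabilistic inclusion argument by this one-line deterministic observation, the rest of your write-up (Pivovarov giving $|\det T|^{1/n}\gg\sqrt{n}\,L_{L^{\circ}}$ with probability $1-e^{-n}$, $|K|=1$, $|L|^{1/n}\sim 1/n$) assembles into a correct proof identical to the paper's.
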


Note that as a direct consequence of Theorem \ref{teo principal}, the previous theorem and Equation~\eqref{bastasimetricos} we have
\begin{align} \label{cota superior}
    \lvr(K) \sim  \sqrt{n},
\end{align}
for every unconditional body $K \subset \R^n$ (an unconditional body is isotropic and unconditional up to a diagonal operator), which shows the upper estimates in Theorem~\ref{teo incond}.

Recall the following result of Bobkov and Nazarov \cite[Proposition 2.4 and Proposition 2.5]{bobkov2003convex} (see also \cite{lozanovskii1969some} or \cite[Proposition 4.2.4]{brazitikos2014geometry}), which asserts that the normalized $\ell_1$-ball ($\ell_\infty$-ball) in $\R^n$ is the largest set (smallest set) within the class of all
unconditional isotropic bodies (up to some universal constants).

\begin{proposition}{\cite[Proposition 2.4 and Proposition 2.5]{bobkov2003convex}} \label{contencion}
Let $K \subset \R^n$ be an unconditional isotropic convex body. Then,
\begin{align}\label{eq bobkov}
\frac{1}{2 \sqrt {\pi e}} \cdot B_{\infty}^n \subset K \subset \frac{\sqrt{6}}{2} n \cdot  B_1^n,
\end{align}
where $B_{\infty}^n$ and $B_1^n$ stand for the unit balls of $\ell_\infty^n$ and $\ell_1^n$ respectively.
\end{proposition}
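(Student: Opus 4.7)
The plan is to establish the two inclusions separately, both leveraging the unconditionality of $K$ together with the isotropic position. The two halves of the sandwich are structurally different: the outer inclusion $K\subset \tfrac{\sqrt{6}}{2}\,n\,B_{1}^{n}$ is essentially a support-function estimate in the diagonal direction, while the inner inclusion $\tfrac{1}{2\sqrt{\pi e}}\,B_{\infty}^{n}\subset K$ is a Minkowski-functional estimate along that same direction.

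For the outer inclusion I would fix $x\in K$. Since $K$ is unconditional, the vector $(|x_{1}|,\dots,|x_{n}|)$ also belongs to $K$, so I may assume $x\ge 0$ componentwise; then $\|x\|_{1}=\sum_{i}x_{i}=\langle x,\mathbf{1}\rangle\le h_{K}(\mathbf{1})$ with $\mathbf{1}=(1,\dots,1)$. The problem thus reduces to the support-function bound $h_{K}(\mathbf{1})\le \tfrac{\sqrt{6}}{2}\,n$. I would analyse the one-dimensional marginal $g(t)=\left|K\cap\{y:\langle y,\mathbf{1}\rangle=t\}\right|$: this is a symmetric log-concave density of mass one, and, using the cross-term cancellation $\int_{K} x_{i}x_{j}\,dx=0$ enforced by unconditionality together with the isotropy of $K$, its second moment equals
\begin{equation*}
\int t^{2}\,g(t)\,dt\;=\;\int_{K}\Bigl(\sum_{i}x_{i}\Bigr)^{2}dx\;=\;n\,L_{K}^{2}.
\end{equation*}
The target bound on $h_{K}(\mathbf{1})$ would then follow by combining a Hensley-type estimate for the central value of a symmetric log-concave density with the universal bound $L_{K}\le 1/\sqrt{12}$ for unconditional isotropic bodies (itself obtained by applying Hensley's inequality to each coordinate marginal), and tracking the resulting extremizer through the reduction.

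For the inner inclusion, the first observation is that $B_{\infty}^{n}$ is the convex hull of its $2^{n}$ sign vertices $(\pm 1,\dots,\pm 1)$, so by unconditionality plus convexity of $K$ the inclusion reduces to placing the single vertex $v=\tfrac{1}{2\sqrt{\pi e}}\,\mathbf{1}$ inside $K$; the remaining $2^{n}-1$ vertices then belong to $K$ by sign-flipping and their convex hull recovers the whole rescaled cube. Equivalently I must show $\|\mathbf{1}\|_{K}\le 2\sqrt{\pi e}$. The plan here is a Gaussian tail comparison: first I would establish, for unconditional isotropic $K$, a Gaussian-type tail estimate of the form
\begin{equation*}
\Bigl|\bigl\{x\in K:\max_{i}|x_{i}|\ge t\bigr\}\Bigr|\;\le\;C\,n\,\exp\!\bigl(-t^{2}/(2L_{K}^{2})\bigr),
\end{equation*}
which controls the coordinate marginals by a matching Gaussian; then, integrating this tail against the dual description $\|\mathbf{1}\|_{K}=\sup_{y\in K^{\circ}}\sum_{i}y_{i}$ and optimising in $t$, one recovers the estimate on $\|\mathbf{1}\|_{K}$. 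The numerical constant $2\sqrt{\pi e}$ arises here as twice the reciprocal of the Gaussian extremum $\sup_{t>0}\,t\,\phi(t)=1/\sqrt{2\pi e}$, where $\phi$ is the standard normal density.

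The main obstacle will be matching the exact numerical constants $\tfrac{\sqrt{6}}{2}$ and $\tfrac{1}{2\sqrt{\pi e}}$, as opposed to qualitative versions with unspecified absolute constants: the latter follow comfortably from the isotropic position plus log-concavity of coordinate marginals, but pinning down the sharp constants requires carrying the Hensley extremizer through the outer-side reduction and the Gaussian tail extremizer through the inner-side reduction, and verifying that the two one-dimensional optimisations are indeed tight for $K$ the rescaled cube and cross-polytope respectively.
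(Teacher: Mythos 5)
First, note that the paper does not prove this proposition at all: it is imported verbatim from Bobkov--Nazarov (Propositions 2.4 and 2.5 of \cite{bobkov2003convex}), so there is no in-paper argument to compare against and your proposal must stand on its own. As written, both halves have genuine gaps.

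For the outer inclusion, the reduction to $h_K(\mathbf{1})=\max_{x\in K}\|x\|_1$ is fine, but the invariant you propose to use --- the second moment $\int_K(\sum_i x_i)^2\,dx=nL_K^2$ of the diagonal marginal, together with log-concavity/Hensley --- is provably too weak. A marginal of a volume-one convex body in a unit direction $\theta$ is $\frac{1}{n-1}$-concave, and the extremal profile $(1-|t|/R)^{n-1}$ has variance $\sim R^2/n^2$; so variance $\sigma^2$ only forces $R\lesssim n\sigma$. With $\theta=\mathbf{1}/\sqrt{n}$ and $\sigma=L_K$ this gives $h_K(\mathbf{1})\lesssim n^{3/2}L_K$, a full $\sqrt{n}$ short of the claim (and this loss is real for non-unconditional cone-like bodies, so it cannot be repaired without injecting unconditionality beyond the vanishing of cross terms, which already follows from isotropy). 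The actual argument needs the sub-exponential tail of $\|X\|_1=\sum_i|X_i|$ with rate $\sim 1/L_K$ (Bobkov--Nazarov's comparison with the product exponential measure), played against the lower bound $\P(\|X\|_1\ge\lambda M)\ge\left(\frac{1-\lambda}{2}\right)^n$ obtained by contracting $K\cap\R^n_+$ towards a maximizer. Separately, your auxiliary claim $L_K\le 1/\sqrt{12}$ is false and points the wrong way: the cube \emph{minimizes} the isotropic constant among unconditional isotropic bodies, so $L_K\ge 1/\sqrt{12}$; the correct upper bound is $L_K\le 1/\sqrt{2}$, and it comes from Loomis--Whitney plus the section-equals-projection identity for unconditional bodies, not from Hensley applied to a single coordinate marginal.

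For the inner inclusion, the reduction to showing $\frac{1}{2\sqrt{\pi e}}\mathbf{1}\in K$ is correct, but the proposed route cannot work for two reasons. The tail estimate $|\{x\in K:\max_i|x_i|\ge t\}|\le Cn\,e^{-t^2/(2L_K^2)}$ is false: coordinate marginals of isotropic log-concave measures are only $\psi_1$ (the two-sided exponential is log-concave), and sub-Gaussian behaviour for unconditional bodies holds only for the diagonal functional $\sum_i x_i$, not for individual coordinates. More fundamentally, an \emph{upper} bound on the measure of $K$ outside a region can never certify that a point lies \emph{inside} $K$; the logic is inverted. The standard proof produces an explicit point: the barycenter of $K\cap\R^n_+$ equals $(\E|x_1|,\dots,\E|x_n|)$ and lies in $K$, whence by unconditionality the box $\prod_i[-\E|x_i|,\E|x_i|]\subset K$; then $\E|x_i|\ge L_K/\sqrt{2}$ (sharp for the exponential density among symmetric log-concave marginals) and $L_K\ge 1/\sqrt{2\pi e}$ (the Euclidean ball minimizes the second moment among volume-one bodies), and $\frac{1}{\sqrt{2}}\cdot\frac{1}{\sqrt{2\pi e}}=\frac{1}{2\sqrt{\pi e}}$ is exactly the stated constant. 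You correctly guessed that $2\sqrt{\pi e}$ has a Gaussian-flavoured origin, but it enters through the lower bound on $L_K$, not through a tail comparison.
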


It should be noted that \eqref{cota superior} can be obtained by a direct use of a classical result of Dvoretzky and Rogers. Indeed, given a centrally symmetric convex body $L \subset \R^n$, by \cite[Theorem 5A]{dvoretzky1950absolute} (see also \cite{pelczynski1991parallelepipeds}) there is a centrally symmetric parallelepiped $ P \supset L$ such that
\begin{align}\label{eq1}
\left(\frac{\vert P \vert}{ \vert L \vert} \right)^{1/n} \leq c \sqrt{n},
\end{align}
for some absolute constant $c>0$.  Thus, by Remark \ref{propiedades elementales} (5), $\vr(B_{\infty}^n,L) \ll \sqrt{n}.$
If $K$ is an unconditional body, by Proposition~\ref{contencion}  we have $\vr(K,B_{\infty}^n) \sim 1$. By Remark \ref{propiedades elementales} (4) we obtain
\begin{align}\label{eq vr incondicional}
\vr(K,L) \leq \vr(K,B_{\infty}^n) \cdot \vr(B_{\infty}^n,L) \ll  \sqrt{n}.
\end{align}

Observe that, in general, understanding how the parallelepiped $P$ in Equation~\eqref{eq1} looks like seems difficult (its construction depends on certain contact points when $L$ is in John position,  which are not easy to find explicitly), thus Theorem~\ref{teo: aleatorio posicion} seems much stronger since it provides a random algorithm that works with high probability.

We therefore state the following probabilistic construction of the Dvoretzky-Rogers' parallelepiped, which can be derived from a result of Pivovarov. Note that Theorem \ref{teo: aleatorio posicion} is a direct consequence of the next theorem together with the first inclusion of Proposition \ref{contencion}.
\begin{theorem}\label{teo: random dvoretzky}
Let $L \subset \R^n$ be a centrally symmetric convex body such that $L^{\circ}$ is in isotropic position and consider the random matrix
$T:=\sum_{j=1}^n X_j \otimes e_j$, where $X_1, \dots, X_n$ are independently chosen accordingly to the uniform measure in the isotropic body $L^{\circ}$. With probability greater than or equal to $1 -e^{-n}$, the parallelepiped $P = T^{-1}(B_{\infty}^n)$ contains $L$ and
\begin{align*}
\left(\frac{|P|}{|L|}\right)^{\frac{1}{n}} \ll\frac{\sqrt{n}}{L_{L^{\circ}}}.
\end{align*}
\end{theorem}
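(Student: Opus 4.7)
The plan is to recognise the random parallelepiped $P=T^{-1}(B_{\infty}^n)$ as a polar set. With the standard pairing, $T=\sum_{j=1}^n X_j\otimes e_j$ is the operator for which $(Tx)_j=\langle X_j,x\rangle$ (so the rows of $T$ are $X_1,\dots,X_n$). Then
\[
P \;=\; \{x\in\R^n:|\langle X_j,x\rangle|\leq 1,\ 1\leq j\leq n\} \;=\; \bigl(\absconv\{X_1,\dots,X_n\}\bigr)^{\circ}.
\]
Because each $X_j$ is drawn from $L^{\circ}$, $\absconv\{X_1,\dots,X_n\}\subset L^{\circ}$, and taking polars yields $L=L^{\circ\circ}\subset P$ \emph{deterministically}. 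Thus the containment half of the theorem requires no probabilistic analysis at all.

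For the volume ratio, $T$ is almost surely invertible and $P$ is the preimage of the cube, so $|P|=2^n/|\det T|$. Combining this identity with the Bourgain--Milman inequality $|L|^{1/n}|L^{\circ}|^{1/n}\gg 1/n$ and with the normalisation $|L^{\circ}|=1$ coming from the isotropic position of $L^{\circ}$ gives
\[
\left(\frac{|P|}{|L|}\right)^{1/n}=\frac{2}{|\det T|^{1/n}\,|L|^{1/n}}\;\ll\;\frac{n}{|\det T|^{1/n}}.
\]
The theorem therefore reduces to the single probabilistic statement
\[
\P\bigl(|\det T|^{1/n}\gg \sqrt{n}\,L_{L^{\circ}}\bigr)\;\geq\;1-e^{-n}.
\]

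This is a small-deviation lower bound of order $\sqrt{n}$ for the $n$-th root of the absolute determinant of a random matrix whose rows are i.i.d.\ samples from a log-concave isotropic probability measure with isotropic constant $L_{L^{\circ}}$. After rescaling the rows by $1/L_{L^{\circ}}$ (so that they become drawn from a log-concave isotropic law with identity covariance), one needs the log-concave counterpart of the Gaussian determinant estimate already quoted in Equation~\eqref{determinante gaussiana}. Pivovarov establishes such a bound in \cite{pivovarov2010determinants} via the Blaschke--Petkantschin base-times-height formula together with standard concentration of linear marginals of log-concave measures; inserting it completes the argument. Locating (and, if need be, re-deriving with the precise $1-e^{-n}$ tail) the log-concave form of Pivovarov's determinant lower bound is the main obstacle; once it is in hand, the polar inclusion and the Bourgain--Milman conversion $|L|^{1/n}\leftrightarrow |L^{\circ}|^{1/n}$ are essentially formal, and the theorem follows.
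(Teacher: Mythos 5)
Your proposal is correct and follows essentially the same route as the paper: the containment $L\subset P$ is the (deterministic) observation that $X_j\in L^{\circ}$ forces $\|T:X_L\to\ell_\infty^n\|\le 1$, the volume identity $|P|=2^n/|\det T|$ is combined with Bourgain--Milman and $|L^{\circ}|=1$ to give $|L|^{1/n}\gg 1/n$, and the remaining probabilistic input is exactly Pivovarov's determinant lower bound for rows sampled from an isotropic convex body, which the paper cites as \cite[Proposition 1]{pivovarov2010determinants} with precisely the $1-e^{-n}$ tail you were unsure about. No gap.
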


\begin{proof}

By \cite[Proposition
1]{pivovarov2010determinants} we know that
\begin{align} \label{det}
\Pro\left\{   \vert \det \big( \sum_{j=1}^n X_j \otimes e_j \big)\vert^{1/n} \gg \sqrt{n} L_{L^{\circ}} \right\} > 1 -  e^{-n}.
\end{align}

On the other hand since $|\langle X_i,y \rangle| \leq 1$ for all $y \in L$ and $1\leq i \leq n$ we have that $\norm{T: X_L \to \ell_{\infty}^n} \leq 1$, where $T:=\sum_{j=1}^n X_j \otimes e_j$ .

Thus, $T(L) \subset B_{\infty}^n$, or equivalently $L \subset T^{-1}(B_{\infty}^n):=P$ and the ratio
\begin{align}\label{ratio}
\left(\frac{|P|}{|L|}\right)^{\frac{1}{n}} = \frac{\vn{B_{\infty}^n}}{\vert \det{T} \vert^{\frac{1}{n}}\vn{L}}.
\end{align}

Therefore, by Equations \eqref{ratio} and \eqref{det}  and  taking into account that $\vn{L} \sim \frac{1}{n}$ (which comes by applying the Blaschke-Santaló/Bourgain-Milman inequality,  Equation~\eqref{santalo},  since $|L^{\circ}| = 1$) we have, with probability greater than or equal to $1-e^{-n}$,

\begin{align}
\left(\frac{|P|}{|L|}\right)^{\frac{1}{n}} \ll \frac{\sqrt{n}}{L_{L^{\circ}}},
\end{align}
which concludes the proof.
\end{proof}

We finish the article with a consequence of Theorem~\ref{teo: aleatorio posicion}.

\begin{corollary} \label{producto piola}
For every centrally symmetric convex  body $L \subset \R^n$ we have
\begin{align}
    \vr(B_{\infty}^n,L) \cdot L_{L^{\circ}}\ll \sqrt{n}.
\end{align}

\end{corollary}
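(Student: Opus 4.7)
The plan is to deduce this corollary as a direct consequence of Theorem \ref{teo: random dvoretzky}: that result already produces, with high probability, an affine image of $L$ inside the cube $B_\infty^n$ whose volume is controlled in terms of $L_{L^\circ}$, and translating this into the language of the volume ratio is essentially a rewriting.

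First, I would reduce to the case in which $L^\circ$ is in isotropic position. By Remark \ref{propiedades elementales} (5), the quantity $\vr(B_\infty^n, L)$ depends only on the affine class of $L$, and the isotropic constant $L_{L^\circ}$ depends only on the linear class of $L^\circ$; hence one can apply a suitable matrix $A \in GL(n)$ to $L^\circ$ to put it in isotropic position without changing either factor on the left-hand side of the claimed inequality. Second, I would invoke Theorem \ref{teo: random dvoretzky}: applied to this normalized $L$, it provides, with probability at least $1-e^{-n}$ (in particular non-vacuously), a linear map $T = \sum_{j=1}^n X_j \otimes e_j$ such that $L \subset T^{-1}(B_\infty^n)$, equivalently $T(L) \subset B_\infty^n$, together with the estimate
\begin{equation*}
\left( \frac{|T^{-1}(B_\infty^n)|}{|L|} \right)^{1/n} = \left( \frac{|B_\infty^n|}{|T(L)|} \right)^{1/n} \ll \frac{\sqrt{n}}{L_{L^\circ}}.
\end{equation*}
Third, since $T(L) \subset B_\infty^n$, the very definition of the volume ratio as an infimum over affine images of $L$ that fit inside $B_\infty^n$ gives
\begin{equation*}
\vr(B_\infty^n, L) \leq \left( \frac{|B_\infty^n|}{|T(L)|} \right)^{1/n} \ll \frac{\sqrt{n}}{L_{L^\circ}},
\end{equation*}
which is the claimed inequality after rearrangement.

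I do not foresee any substantial obstacle in this argument; the content of the corollary is essentially a reformulation of the random Dvoretzky--Rogers parallelepiped construction in the vocabulary of volume ratios, exploiting the fact that $B_\infty^n$ is already unconditional so that no additional containment step is needed. The only point that requires minor attention is verifying that the initial normalization (bringing $L^\circ$ into isotropic position via a $GL(n)$ action) preserves both factors of the product $\vr(B_\infty^n, L) \cdot L_{L^\circ}$, so that the bound proved in the isotropic case transfers back to an arbitrary centrally symmetric convex body $L \subset \R^n$.
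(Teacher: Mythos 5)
Your proposal is correct and matches the paper's intended argument: the corollary is stated there precisely as a direct consequence of the random Dvoretzky--Rogers parallelepiped construction (Theorem~\ref{teo: random dvoretzky} / Theorem~\ref{teo: aleatorio posicion}), obtained exactly as you do by normalizing $L^\circ$ to isotropic position (which changes neither $\vr(B_{\infty}^n,L)$ nor $L_{L^{\circ}}$) and then reading off $\vr(B_{\infty}^n,L) \leq \left(\frac{|P|}{|L|}\right)^{1/n} \ll \sqrt{n}/L_{L^{\circ}}$ from the containment $L \subset P = T^{-1}(B_{\infty}^n)$.
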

This seems to be an improvement of the well-known inequality \cite[Proposition 3.5.13]{brazitikos2014geometry} $$L_{L} \cdot L_{L^{\circ}} \ll \sqrt{n}.$$
Indeed, by Equation~\eqref{Milman-Pajor} we known that
$$L_L \ll \vr(B_{\infty}^n,L),$$
but in general $\vr(B_{\infty}^n,L)$ can be larger than $L_L$: according to Theorem~\ref{teo principal proba} and \cite[Theorem 4.4.1]{brazitikos2014geometry} there is a polytope $L^{(2n)}$ which verifies $$\vr(B_{\infty}^n,L^{(2n)}) \gg \sqrt{n}; \;\;\mbox{and}\;\; L_{L^{(2n)}} \ll \log(n).$$

In Corollary~\ref{producto piola}, at least at first instance, one should be tempted to change $\vr(B_{\infty}^n,L)$ by $\sup\limits_{K \subset \R^n \mbox{ unc.}} \vr(K,L)$, where the infimum run all over unconditional convex bodies;  but using Proposition~\ref{contencion}, it can be seen that
$$
\vr(B_{\infty}^n,L) \sim \sup_{K \subset \R^n \mbox{ unc}} \vr(K,L).
$$

\subsection{Acknowledgement}
The authors are grateful to  Apostolos Giannopoulos and  Alexander Litvak for pointing out the alternative proof of Proposition \ref{propo aprox} which uses the $M$-position.

\end{document}